\documentclass[12pt]{article}
\usepackage{graphicx}\usepackage{float}\usepackage{amsmath}\usepackage{amsfonts}\usepackage{amssymb}\usepackage{color}\usepackage{tikz}
\usepackage[russian, english]{babel}\usepackage{ytableau}
\newtheorem{theorem}{Theorem}

\newtheorem{example}[theorem]{Example}
\newtheorem{lemma}[theorem]{Lemma}\newtheorem{proposition}[theorem]{Proposition}
\newtheorem{remark}[theorem]{Remark}
\newenvironment{proof}[1][Proof]{\noindent\textbf{#1.} }{\ \rule{0.5em}{0.5em}}
\def\myblacksquare{\rule{1.2ex}{1.2ex}}
\def\hh{h}

\newcounter{ourcount}
\setcounter{ourcount}{1}
\newcounter{myenumi}
\setcounter{myenumi}{1}

\begin{document}
$\ $\vskip -3.8cm

\begin{center}
{\Large \textbf{Fusion procedure for the walled Brauer algebra}}

\vspace{.4cm} {\large \textbf{D.V. Bulgakova$^{\diamond\,\dag}$ and O. Ogievetsky$^{\diamond\,\dag\,\ast}$ }}

\vskip .3cm $^{\diamond}$Aix Marseille Universit\'{e}, Universit\'{e} de
Toulon, CNRS, \\ CPT UMR 7332, 13288, Marseille, France

\vskip .05cm $^{\dag}$ I.E.Tamm Dept of Theoretical Physics, Lebedev Physical Institute,\\ 
Leninsky prospect 53, 119991, Moscow, Russia

\vskip .05cm $^{\ast}${Skolkovo Institute of Science and Technology, Moscow, Russia}
\end{center}

\begin{abstract}\noindent
We establish two versions of the fusion procedure for the walled Brauer algebras. In each of them, a complete system of primitive pairwise orthogonal idempotents for  the walled Brauer algebra
is constructed by consecutive evaluations of a rational function in several variables on contents of standard walled tableaux. 
\end{abstract}

\section{Introduction}
The main interest in the walled Brauer algebra ${\sf B}_{r,s}(\delta)$ is related to its role of 
the centralizer of the diagonal action of the group $GL(V)$ on the mixed tensor space $V^{\otimes r}\otimes (V^*)^{\otimes s}$, see \cite{K,T,BCHLLS,Ni}. As the centralizer, the walled Brauer algebra acts irreducibly   
on the space of multiplicities in the decomposition of the tensor product $V^{\otimes r}\otimes (V^*)^{\otimes s}$ into a sum of $GL(V)$-irreducibles, so we 
need to understand well the basis in every irreducible representation of the algebra ${\sf B}_{r,s}(\delta)$. A fusion procedure gives a construction of the maximal family of 
pairwise orthogonal minimal idempotents in the algebra, and therefore, provides a way to understand bases in the irreducible representations of the algebra ${\sf B}_{r,s}(\delta)$. 

\vskip .1cm
The fusion procedure (for the symmetric group) originates in the work of Jucys \cite{Ju}, see also the subsequent works \cite{Ch,Na,GP}. 
A simplified version of the fusion procedure for the symmetric group involving the consecutive evaluations was suggested by Molev in \cite{Mo}. 
Later the analogues of this simplified fusion procedure were suggested for the Hecke algebra \cite{IM}, for the Brauer algebra \cite{IM,IMO}, 
for the complex reflection groups of type $G(m,1,n)$, for the cyclotomic Hecke algebras \cite{OP3,OP4}, for the cyclotomic Brauer algebras \cite{C}, for the Birman--Murakami--Wenzl algebras \cite{IMO2}. In \cite{OP5} the fusion procedure was applied for the calculation of weights of certain Markov traces on the cyclotomic Hecke algebras.

\vskip .1cm
The walled Brauer algebras form a tower which is an important ingredient of our presentation. The first $r$ floors 
of the tower reproduce the tower of the symmetric groups. Essentially new effects start to appear at the $(r+1)$-st 
floor, after crossing the wall. 

\vskip .1cm
In this work we develop the fusion procedure, in the spirit of \cite{Mo}, for the walled Brauer algebra. That is, we exhibit a `universal' rational function with values in the walled Brauer algebra whose consecutive evaluations at the contents of the appropriately generalized `walled tableaux' give a complete system of primitive idempotents. This function is a product of a prefactor $z_T$, which is a usual rational function, and a universal rational function $\Psi_{r,s}$, 
see Section \ref{maosrere} for details. We find the minimal collection ${\sf ze}_T$ of factors in $z_T$ such that the
consecutive evaluations of the product ${\sf ze}_T\Psi_{r,s}$ are finite. To describe the function ${\sf ze}_T$ we adapt
the notion of `exponents' introduced in \cite {IM} for the full Brauer algebra to the situation of the walled Brauer algebra.
For the walled Brauer algebra the exponents take a simpler form than for the full Brauer algebra; the exponents are expressed in  terms of the Laplacian of a function which counts the lengths of the diagonal in diagrams. We exhibit a version of the fusion procedure which depends on a free parameter. 

\section{Walled Brauer algebra}
\subsection{Definition}
Let $r$ and $s$ be non-negative integers. We denote by $p^u_{r,s}$ and $p^d_{r,s}$ two sets, each consisting of $r+s$ points aligned horizontally  on the plane; the points in the set $p^d_{r,s}$
are placed under the points in the set $p^u_{r,s}$. The left $r$ points in the sets  $p^u_{r,s}$ and $p^d_{r,s}$ we denote by $a^u$ and $a^d$, the right $s$ points -- by $b^u$ and $b^d$. The left 
points are separated by the wall from the right points. 
A walled $(r,s)$-diagram $d$ is a bijection between the sets $a^u\cup b^d$ and $a^d\cup b^u$. In particular the number of walled $(r,s)$-diagrams is equal to $(r+s)!$. 
A walled $(r,s)$-diagram is visualised by placing the edges between the corresponding points 
of $a^u\cup b^d$ and $a^d\cup b^u$. 

\vskip .2cm
Let $\delta$ be a complex parameter. The walled Brauer algebra ${\sf B}_{r,s}(\delta)$ is defined as the $\mathbb{C}$-linear span of the walled $(r,s)$-diagrams with the multiplication defined as follows.
The product of two diagrams $\Upsilon_1$ and $\Upsilon_2$ is determined by placing $\Upsilon_1$ above $\Upsilon_2$ and identifying the vertices of the bottom row of $\Upsilon_1$ with the corresponding vertices in the top row of $\Upsilon_2$. Let $\ell$ be the number of closed loops so obtained. 
The product $\Upsilon_1\Upsilon_2$ is given by $\delta^{\ell}$ times the resulting diagram with loops omitted.  

\vskip .2cm
Let $s_i$, $1\leqslant i<r$ or $r<i<r+s$, and $d$ denote the following walled $(r,s)$-diagrams (the vertical dotted line represents the wall): 

\begin{align*}
&\begin{array}{c}
\begin{tikzpicture}
\draw (0,0) --(0,1.2);\draw (1.5,0) --(2,1.2);\draw (1.5,1.2) --(2,0);\draw (3.5,0) --(3.5,1.2);\draw [dotted] (4,-.2) --(4,1.4);\draw (4.5,0) --(4.5,1.2);\draw (6,0) --(6,1.2);
\node at (.8,1.2) {$\dots$};\node at (.8,0) {$\dots$};\node at (2.8,1.2) {$\dots$};\node at (2.8,0) {$\dots$};\node at (5.3,1.2) {$\dots$};\node at (5.3,0) {$\dots$};
\node at (0,-.4) {$1$};\node at (1.3,-.4) {$i$};\node at (2.2,-.42) {$i+1$};\node at (3.5,-.43) {$r$};\node at (4.8,-.4) {$r+1$};\node at (6.3,-.4) {$r+s$};
\node at (8,.5) {$,\ \ \ i=1,\dots,r-1$};\node at (-1.2,.5) {$s_i:=$};
\end{tikzpicture}
\end{array}\\
&\begin{array}{c}
\begin{tikzpicture}
\draw (6,0) --(6,1.2);\draw (4.5,0) --(4,1.2);\draw (4.5,1.2) --(4,0);\draw (2.5,0) --(2.5,1.2);\draw [dotted] (2,-.2) --(2,1.4);\draw (1.5,0) --(1.5,1.2);\draw (0,0) --(0,1.2);
\node at (5.2,1.2) {$\dots$};\node at (5.2,0) {$\dots$};\node at (3.2,1.2) {$\dots$};\node at (3.2,0) {$\dots$};\node at (0.7,1.2) {$\dots$};\node at (0.7,0) {$\dots$};
\node at (0,-.4) {$1$};\node at (4.7,-.42) {$i+1$};\node at (3.8,-.4) {$i$};\node at (2.6,-.4) {$r+1$};\node at (1.5,-.43) {$r$};\node at (6.3,-.42) {$r+s$};
\node at (8.7,.5) {$,\ \ \ i=r+1,\dots,r+s-1$};\node at (-1.2,.5) {${s}_i:=$};
\end{tikzpicture}
\end{array}\\
&\begin{array}{c}
\begin{tikzpicture}
\draw (0,0) --(0,1.2);\draw (1.5,0) --(1.5,1.2);\draw (2.1,0) .. controls (2.2,0.4) and (2.7,0.4) .. (2.8,0);\draw [dotted] (2.45,-.2) --(2.45,1.4);
\draw (2.1,1.2) .. controls (2.2,0.8) and (2.7,0.8) .. (2.8,1.2);\draw (3.4,0) --(3.4,1.2);\draw (4.9,0) --(4.9,1.2);\node at (.8,1.2) {$\dots$};
\node at (.8,0) {$\dots$};\node at (4.2,1.2) {$\dots$};\node at (4.2,0) {$\dots$};\node at (0,-.4) {$1$};\node at (2.0,-.42) {$r$};\node at (2.95,-.40) {$r+1$};
\node at (5.0,-.40) {$r+s$};\node at (-1.2,.5) {$d:=$};
\end{tikzpicture}
\end{array}\\
\end{align*}
The walled Brauer algebra ${\sf B}_{r,s}(\delta)$ is generated by the elements $s_i$, $1\leqslant i<r$ or $r<i<r+s$, and $d$, with the 
defining relations, see, e.g., \cite{BS,JK}, 
$$\begin{array}{c}
s_i^2=1\ ,\ d^2=\delta d\ ,\\[.5em]
s_is_{i+1}s_i=s_{i+1}s_is_{i+1}\ ,\\[.5em]
s_i s_j=s_j s_i \ \ \text{if}\ \  |i-j|>1\ ,\\[.5em]
d s_{r\pm 1} d= d\ \text{ and } \ ds_i=s_id \ \ \text{if}\ \  i\neq r\pm1\ , \\[.5em]
d s_{r+1} s_{r-1} d s_{r-1}=d s_{r+1} s_{r-1} d s_{r+1}\ ,\\[.5em]
s_{r-1} d {s}_{r+1} s_{r-1} d={s}_{r+1} d {s}_{r+1} s_{r-1} d\ .
\end{array}$$

The algebra ${\sf B}_{r,s}(\delta)$ admits an anti-automorphism $\iota$, which acts as identity on the generators,
\begin{equation}\label{antiai} \iota(s_i)=s_i\ ,\ \iota(d)=d\ ,\ \iota(xy)=\iota(y)\iota(x)\ .\end{equation}

\vskip .2cm
The subalgebra, generated by the elements $s_i$, $1\leqslant i<r$, of the algebra ${\sf B}_{r,s}(\delta)$ 
is isomorphic to the group ring $\mathbb{C}[\mathbb{S}_r]$ of the symmetric group; 
the subalgebra, generated by the elements ${s}_i$, $r<i<r+s$, is isomorphic to the group ring $\mathbb{C}[\mathbb{S}_s]$. 

\vskip .2cm
It is known, see \cite{CVDM}, Theorem 6.3, that the walled Brauer algebra ${\sf B}_{r,s}(\delta)$ is semisimple if and
only if one of the following conditions holds:
\vskip .1cm\par $r = 0$ or $s = 0$,
\vskip .1cm\par $\delta \notin\mathbb{Z}$,
\vskip .1cm\par $\vert \delta\vert > r + s - 2$,
\vskip .1cm\par $\delta = 0$ and $(r, s)\in \{(1, 2),(1, 3),(2, 1),(3, 1)\}$.

\vskip .2cm
In the sequel we always assume that $\delta$ is generic, that is, the walled Brauer algebra is semisimple.

\vskip .2cm
Let $\mathsf{A}_k$ be the subalgebra in the algebra ${\sf B}_{r,s}(\delta)$ generated by the walled $(r,s)$-diagrams non-trivial only at the first $k$ sites of the sets
$p^u_{r,s}$ and $p^d_{r,s}$ (that is, to the right of the $k$-th site the diagram has only vertical segments).  For $k\leqslant r$, the algebra $\mathsf{A}_k$
is isomorphic to ${\sf B}_{k,0}(\delta)\cong \mathbb{C}[\mathbb{S}_k]$ while for $r<k\leqslant r+s$ the algebra $\mathsf{A}_k$
is isomorphic to ${\sf B}_{r,k-r}(\delta)$. The algebras $\mathsf{A}_k$, $0\leqslant k\leqslant r+s$, form an ascending chain of algebras
\begin{equation}\label{ascala}\mathbb{C}\equiv\mathsf{A}_0\subset \mathsf{A}_1\subset\ldots\subset\mathsf{A}_{r+s}\equiv {\sf B}_{r,s}(\delta)\ .\end{equation}
The Bratteli diagram for this chain plays an important role in the representation theory of the walled Brauer algebras and, in particular, in our fusion procedures. 

\vskip .2cm
If $r=0$ or $s=0$ the algebra ${\sf B}_{r,s}(\delta)$ is the group algebra of the symmetric group, for which the fusion procedure is well established, so we shall always assume that both $r$ and $s$ are different from 0. 

\vskip .2cm
To shorten the formulation of our results it is convenient to introduce the following function on the set $\{ 1,\dots ,r+s\}$
\begin{equation}\label{funepsi} \varepsilon (j):=\left\{ \begin{array}{l}0\ \text{ if } \ j\leqslant r\ ,\\[.4em]
1\ \text{ if }\  j> r\ .\end{array}\right.\end{equation}

\subsection{Jucys--Murphy elements}
The Jucys--Murphy elements for the walled Brauer algebras are adapted to the chain (\ref{ascala}).

\vskip .2cm
Let $s_{i,k}$ where $1\leqslant i<k\leqslant r$ or $ r+1\leqslant i<k\leqslant r+s$ and $d_{i,k}$ where $1\leqslant i\leqslant r<k\leqslant r+s$
denote the following walled $(r,s)$-diagrams:

\vskip .2cm
\begin{tikzpicture}
\draw (0,0) --(0,1.2);\draw (1,0) --(2.5,1.2);\draw (1,1.2) --(2.5,0);\draw (3.5,0) --(3.5,1.2);\draw [dotted] (4,-.2) --(4,1.4);\draw (4.5,0) --(4.5,1.2);\draw (6,0) --(6,1.2);
\node at (.55,1.2) {$\dots$};\node at (.55,0) {$\dots$};\node at (1.8,1.2) {$\dots$};\node at (1.8,0) {$\dots$};\node at (3,1.2) {$\dots$};\node at (3,0) {$\dots$};
\node at (5.3,1.2) {$\dots$};\node at (5.3,0) {$\dots$};
\node at (0,-.4) {$1$};\node at (0.95,-.4) {$i$};\node at (2.55,-.42) {$k$};\node at (3.5,-.43) {$r$};\node at (4.8,-.4) {$r+1$};\node at (6.3,-.4) {$r+s$};
\node at (7.9,.5) {$,$}; \node at (-1.2,.5) {$s_{i,k}:=$};
\end{tikzpicture}

\vskip .4cm
\begin{tikzpicture}
\draw (6,0) --(6,1.2);\draw (5,0) --(3.5,1.2);\draw (5,1.2) --(3.5,0);\draw (2.5,0) --(2.5,1.2);\draw [dotted] (2,-.2) --(2,1.4);\draw (1.5,0) --(1.5,1.2);\draw (0,0) --(0,1.2);
\node at (5.5,1.2) {$\dots$};\node at (5.5,0) {$\dots$};\node at (4.3,1.2) {$\dots$};\node at (4.3,0) {$\dots$};\node at (3.05,1.2) {$\dots$};\node at (3.05,0) {$\dots$};
\node at (0.7,1.2) {$\dots$};\node at (0.7,0) {$\dots$};
\node at (0,-.4) {$1$};\node at (5,-.42) {$k$};\node at (3.5,-.4) {$i$};\node at (2.6,-.4) {$r+1$};\node at (1.5,-.43) {$r$};\node at (6.2,-.42) {$r+s$};
\node at (8.6,.5) {$,$}; \node at (-1.2,.5) {${s}_{i,k}:=$};
\end{tikzpicture}

\vskip .4cm
\begin{tikzpicture}
\draw (0,0) --(0,1.2);\draw (3,0) --(3,1.2);\draw (1.5,0) .. controls (2.4,0.55) and (4.5,0.55) .. (5.4,0);\draw [dotted] (3.45,-.2) --(3.45,1.4);
\draw (1.5,1.2) .. controls (2.4,0.65) and (4.5,0.65) .. (5.4,1.2);\draw (3.9,0) --(3.9,1.2);\draw (6.9,0) --(6.9,1.2);\node at (2.4,1.2) {$\dots$};\node at (2.4,0) {$\dots$};
\node at (4.6,1.2) {$\dots$};\node at (4.6,0) {$\dots$};\node at (0.9,1.2) {$\dots$};\node at (0.9,0) {$\dots$};\node at (6.1,1.2) {$\dots$};\node at (6.1,0) {$\dots$};
\node at (0,-.4) {$1$};\node at (1.5,-.4) {$i$};\node at (2.9,-.42) {$r$};\node at (4.1,-.40) {$r+1$};\node at (5.4,-.4) {$k$};\node at (7,-.40) {$r+s$};
\node at (8.5,.5) {$.$}; \node at (-1.2,.5) {$d_{i,k}:=$};
\end{tikzpicture}

\vskip .2cm
\noindent In particular, $s_i=s_{i,i+1}$, $1\leqslant i<r$ or $r<i<r+s$, $d=d_{r,r+1}$.

\vskip .2cm
In terms of generators, the elements $s_{i,k}$ and $d_{i,k}$ can be written as 
$$\begin{array}{c}
    s_{i,k}=s_i s_{i+1}\dots s_{k-2}s_{k-1}s_{k-2}\dots s_{i+1}s_i\,,\\[.6em]
     d_{i,k}=s_i s_{i+1}\dots s_{r-1}{s}_{k-1}{s}_{k-2}\dots {s}_{r+1}d{s}_{r+1}\dots{s}_{k-2}{s}_{k-1}s_{r-1}\dots s_{i+1}s_i\,.
\end{array}$$

The Jucys-Murphy elements for the walled Brauer algebra are, see \cite{BS,SS,JK},
$$x_{k}:=\left\{\begin{array}{ll}
    \hspace{.34cm}{\displaystyle \sum_{i=1}^{k-1} }\, s_{i,k}      & \text{if } 1\leqslant k\leqslant r\ ,\\[.6cm]
    -{\displaystyle \sum_{i=1}^{r} }\, d_{i,k} +{\displaystyle\sum_{i=r+1}^{k-1} }\, {s}_{i,k}+\delta  & \text{if }  k\geqslant r+1\ .
  \end{array}\right.$$
One checks that the element $x_k$ commutes with any element of the subalgebra $\mathsf{A}_{k-1}$, in the notation (\ref{ascala}) for the tower.
This implies that the elements $x_1$,$\dots$, $x_{r+s}$ of ${\sf B}_{r,s}(\delta)$ pairwise commute.

\vskip .2cm
Moreover, it follows from the representation theory of the walled Brauer algebras that the subalgebra generated by the elements 
$x_1,\ldots ,x_{r+s}$ is a maximal commutative subalgebra of the algebra ${\sf B}_{r,s}(\delta)$. 

\vskip .2cm
We note that the walled Brauer algebra ${\sf B}_{r,s}(\delta)$ is naturally a subalgebra of the Brauer algebra  ${\sf B}_{r+s}(\delta)$. 
Interestingly, the Jucys--Murphy elements for the walled Brauer algebra are obtained by omitting in the Jucys--Murphy elements for the Brauer algebra 
the summands corresponding to the diagrams which do not exist in the walled Brauer algebra.

\subsection{Representations}
We will identify each partition $\gamma$ with its diagram so that if the parts of $\gamma$ are $\gamma_1,\gamma_2,\dots$, 
$\gamma_1\geqslant\gamma_2\geqslant\dots$, then the corresponding diagram is a left-justified array of rows of boxes containing $\gamma_1$ boxes in the top row, $\gamma_2$ boxes in the second row, etc. The box in the row $i$ and column $j$ will be denoted by $(i,j)$.

\vskip .2cm
We let 
$$\text{$A(\gamma)$ be the set of all addable cells}$$
and 
$$\text{$R(\gamma)$ the set of all removable cells}$$
of the diagram $\gamma$. 

\vskip .2cm
A bipartition $\Lambda$ is a pair of partitions $\Lambda=(\lambda_L,\lambda_R)$. Let $\mathcal{D}$ be the set of all bipartitions. For each integer $0\leqslant f\leqslant \min(r,s)$, let 
$$\mathcal{D}_{r,s}(f):=\{ \Lambda\in\mathcal{D} \colon |\lambda_L|=r-f\, ,\, |\lambda_R|=s-f\}\ ,$$
where $|\lambda |$ denotes the sum of the parts of $\lambda$, and 
$$   \mathcal{D}_{r,s}:=\bigcup_{f=0}^{\min(r,s)}\mathcal{D}_{r,s}(f)\ .$$
The irreducible representations of the walled Brauer algebra ${\sf B}_{r,s}(\delta)$ are indexed by the elements of the set $\mathcal{D}_{r,s}$, see \cite{CVDM}, Theorem 2.7. 
We shall denote by $V_\Lambda$ the representation corresponding to a bipartition $\Lambda\in \mathcal{D}_{r,s}$.

\vskip .2cm
 The branching rules for the ascending chain (\ref{ascala}) are simple. Here is the figure showing the Bratteli diagram 
for the chain on the example of the walled Brauer algebra ${\sf B}_{2,2}(\delta)$.

\begin{figure}[H]
\centering
\begin{tikzpicture}
\node  at (0,0) {$(\varnothing,\varnothing)$};\draw [thick] (0.64,0) -- (1.56,0);\node  at (1.7,0) {(};\draw[fill] (1.9,0) circle [radius=0.08];\node  at (2.3,0) {$\ ,\varnothing )$};
\node  at (3.7,-1) {(};\draw[fill] (3.9,-1) circle [radius=0.08];\draw[fill] (4.1,-1) circle [radius=0.08];\node  at (4.5,-1) {$\ ,\varnothing )$};
\node  at (3.7,1) {(};\draw[fill] (3.9,1.1) circle [radius=0.08];\draw[fill] (3.9,0.9) circle [radius=0.08];\node  at (4.3,1) {$\ \ ,\varnothing )$};
\draw [thick] (2.75,0.1) -- (3.58,0.75);\draw [thick] (2.75,-0.1) -- (3.58,-0.75);
\node  at (5.75,0) {(};\draw[fill] (5.95,0) circle [radius=0.08];\node  at (6.35,0) {$\ ,\varnothing )$};
\node  at (5.75,-2) {(};\draw[fill] (5.95,-2) circle [radius=0.08];\draw[fill] (6.15,-2) circle [radius=0.08];\draw[fill] (6.55,-2) circle [radius=0.08];
\node  at (6.75,-2) {)};\node  at (6.3,-2.1) {$\ ,$};
\node  at (5.75,2) {(};\draw[fill] (5.95,2.1) circle [radius=0.08];\draw[fill] (5.95,1.9) circle [radius=0.08];\draw[fill] (6.35,2) circle [radius=0.08];
\node  at (6.55,2) {)};\node  at (6.08,1.87) {$\ \, ,$};
\draw [thick] (4.8,0.75) -- (5.63,0.1);\draw [thick] (4.88,-0.8) -- (5.63,-0.1);\draw [thick] (4.8,1.2) -- (5.6,1.9);\draw [thick] (4.86,-1.2) -- (5.6,-1.9);
\node  at (8.9,-0.7) {$(\varnothing,\varnothing)$};\node  at (8.4,0.7) {(};\draw[fill] (8.6,0.7) circle [radius=0.08];\draw[fill] (8.97,0.7) circle [radius=0.08];
\node  at (9.17,0.7) {)};\node  at (8.73,0.6) {$\ ,$};
\node  at (8.4,-2) {(};\draw[fill] (8.6,-2) circle [radius=0.08];\draw[fill] (8.8,-2) circle [radius=0.08];\draw[fill] (9.2,-2) circle [radius=0.08];
\draw[fill] (9.4,-2) circle [radius=0.08];\node  at (9.6,-2) {)};\node  at (8.95,-2.1) {$\ ,\,$};
\node  at (8.4,-3.2) {(};\draw[fill] (8.6,-3.2) circle [radius=0.08];\draw[fill] (8.8,-3.2) circle [radius=0.08];\draw[fill] (9.2,-3.3) circle [radius=0.08];
\draw[fill] (9.2,-3.1) circle [radius=0.08];\node  at (9.43,-3.2) {)};\node  at (8.95,-3.3) {$\ ,\,$};
\node  at (8.4,3.2) {(};\draw[fill] (8.6,3.3) circle [radius=0.08];\draw[fill] (8.6,3.1) circle [radius=0.08];\draw[fill] (9,3.1) circle [radius=0.08];
\draw[fill] (9,3.3) circle [radius=0.08];\node  at (9.22,3.2) {)};\node  at (8.73,3.07) {$\ ,$};
\node  at (8.4,2) {(};\draw[fill] (8.6,2.1) circle [radius=0.08];\draw[fill] (8.6,1.9) circle [radius=0.08];\draw[fill] (9,2) circle [radius=0.08];
\draw[fill] (9.2,2) circle [radius=0.08];\node  at (9.4,2) {)};\node  at (8.73,1.87) {$\ ,$};
\draw [thick] (6.76,0.12) -- (8.3,0.7);\draw [thick] (6.76,-0.1) -- (8.3,-0.65);\draw [thick] (6.7,1.9) -- (8.3,0.8);\draw [thick] (6.7,2) -- (8.3,2);
\draw [thick] (6.7,2.1) -- (8.3,3.1);\draw [thick] (6.9,-1.9) -- (8.3,0.6);\draw [thick] (6.9,-2) -- (8.3,-2);\draw [thick] (6.9,-2.1) -- (8.3,-3.2);
\end{tikzpicture}
\caption{Bratteli diagram} \label{fig:1}
\end{figure}

\subsection{A basis in $V_\Lambda$}
 The following construction is given in \cite{JK}. Let $\Lambda=(\lambda_L,\lambda_R)$ be a bipartition. A {\it standard walled  $\Lambda$-tableau} is a sequence $T=(\Lambda^{(0)},\dots,\Lambda^{(r+s)})$ of bipartitions such that $\Lambda^{(0)}=(\varnothing ,\varnothing)$, $\Lambda^{(r+s)}=\Lambda$ and for each $t=1,\dots,r+s$ the bipartition $\Lambda^{(t)}$ is obtained from $\Lambda^{(t-1)}$ by
 \begin{itemize}
\item adding a box to the first diagram in the bipartition when $1\leqslant t\leqslant r$; 
\item adding a box to the second diagram or removing a box from the first diagram when $r+1\leqslant t\leqslant r+s$. 
\end{itemize}
Thus, $T$ represents a path in the Bratteli diagram of the algebra ${\sf B}_{r,s}(\delta)$. 

\vskip .2cm
We say that $r+s$ is the length of $T$. We write $U\nearrow T$ if the standard walled tableau $U$ of length $r+s-1$ is obtained by removing the last entry 
$\Lambda^{(r+s)}$ from the sequence $T$. 

\vskip .2cm
We shall denote by $\mathcal{T}_\Lambda$ the set of all standard walled $\Lambda$-tableaux.

\vskip .2cm
As always, in the situation when the branching rules for a Bratteli diagram are simple, a path $T\in\mathcal{T}_\Lambda$ defines a one-dimensional 
subspace $V_T$ in the space $V_\Lambda$. Choose an arbitrary non-zero vector $v_T\in V_T$. The set $\{ v_T\}$, $T\in\mathcal{T}_\Lambda$, is a basis 
of the space $V_\Lambda$. 

\vskip .2cm
To each standard walled  tableau $T$ we attach its sequence of {\it contents} $(c_1(T),\dots,c_{r+s}(T))$, where
\begin{itemize}
\item if $ 1\leqslant t\leqslant r$ and $\Lambda^{(t)}$ is obtained from $\Lambda^{(t-1)}$ by adding a box $(i,j)$ to the first diagram in the bipartition then
\begin{equation}\label{c1}c_{t}(T):=j-i\ ,\end{equation}
\item if $t\geqslant r+1$ and $\Lambda^{(t)}$ is obtained from $\Lambda^{(t-1)}$ by removing a box $(i,j)$ from the first diagram then
\begin{equation}\label{c3}c_{t}(T):=-(j-i)\ , \end{equation}
\item and, if $t\geqslant r+1$ and $\Lambda^{(t)}$ is obtained from $\Lambda^{(t-1)}$ by adding a box $(i,j)$ to the second diagram then
\begin{equation}\label{c2}c_{t}(T):=  (j-i)+\delta\ .\end{equation}
\end{itemize}

\vskip .2cm
It is convenient to decorate the Bratteli diagram, writing at each edge of the path $T$ the corresponding content, as shown below on our example of the algebra 
${\sf B}_{2,2}(\delta)$.
  
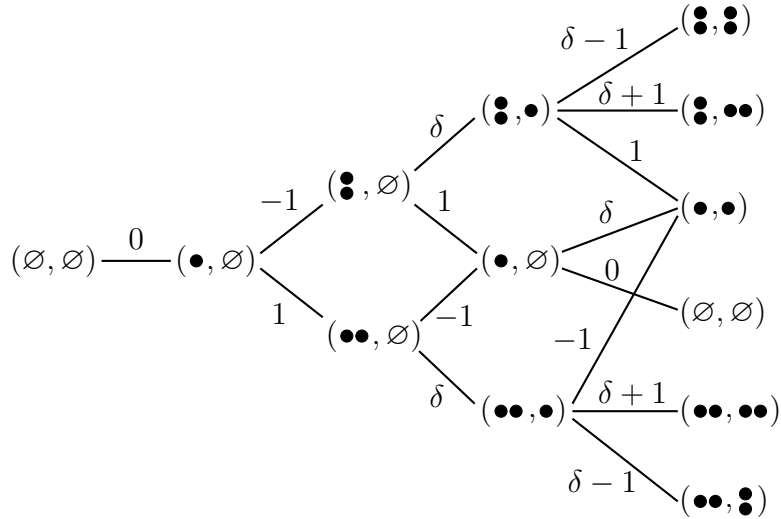
\begin{figure}[H]
\centering
\begin{tikzpicture}
\node  at (0,0) {$(\varnothing,\varnothing)$};\draw [thick] (0.64,0) -- (1.56,0);\node  at (1.7,0) {(};\draw[fill] (1.9,0) circle [radius=0.08];\node  at (2.3,0) {$\ ,\varnothing )$};
\node  at (3.7,-1) {(};\draw[fill] (3.9,-1) circle [radius=0.08];\draw[fill] (4.1,-1) circle [radius=0.08];\node  at (4.5,-1) {$\ ,\varnothing )$};
\node  at (3.7,1) {(};\draw[fill] (3.9,1.1) circle [radius=0.08];\draw[fill] (3.9,0.9) circle [radius=0.08];\node  at (4.3,1) {$\ \ ,\varnothing )$};
\draw [thick] (2.75,0.1) -- (3.58,0.75);\draw [thick] (2.75,-0.1) -- (3.58,-0.75);
\node  at (5.75,0) {(};\draw[fill] (5.95,0) circle [radius=0.08];\node  at (6.35,0) {$\ ,\varnothing )$};\node  at (5.75,-2) {(};\draw[fill] (5.95,-2) circle [radius=0.08];
\draw[fill] (6.15,-2) circle [radius=0.08];\draw[fill] (6.55,-2) circle [radius=0.08];\node  at (6.75,-2) {)};\node  at (6.3,-2.1) {$\ ,$};
\node  at (5.75,2) {(};\draw[fill] (5.95,2.1) circle [radius=0.08];\draw[fill] (5.95,1.9) circle [radius=0.08];\draw[fill] (6.35,2) circle [radius=0.08];\node  at (6.55,2) {)};
\node  at (6.08,1.87) {$\ \, ,$};\draw [thick] (4.8,0.75) -- (5.63,0.1);\draw [thick] (4.88,-0.8) -- (5.63,-0.1);\draw [thick] (4.8,1.2) -- (5.6,1.9);
\draw [thick] (4.86,-1.2) -- (5.6,-1.9);\node  at (8.9,-0.7) {$(\varnothing,\varnothing)$};\node  at (8.4,0.7) {(};\draw[fill] (8.6,0.7) circle [radius=0.08];
\draw[fill] (8.97,0.7) circle [radius=0.08];\node  at (9.17,0.7) {)};\node  at (8.73,0.6) {$\ ,$};
\node  at (8.4,-2) {(};\draw[fill] (8.6,-2) circle [radius=0.08];\draw[fill] (8.8,-2) circle [radius=0.08];\draw[fill] (9.2,-2) circle [radius=0.08];
\draw[fill] (9.4,-2) circle [radius=0.08];\node  at (9.6,-2) {)};\node  at (8.95,-2.1) {$\ ,\,$};
\node  at (8.4,-3.2) {(};\draw[fill] (8.6,-3.2) circle [radius=0.08];\draw[fill] (8.8,-3.2) circle [radius=0.08];\draw[fill] (9.2,-3.3) circle [radius=0.08];
\draw[fill] (9.2,-3.1) circle [radius=0.08];\node  at (9.43,-3.2) {)};\node  at (8.95,-3.3) {$\ ,\,$};
\node  at (8.4,3.2) {(};\draw[fill] (8.6,3.3) circle [radius=0.08];\draw[fill] (8.6,3.1) circle [radius=0.08];\draw[fill] (9,3.1) circle [radius=0.08];
\draw[fill] (9,3.3) circle [radius=0.08];\node  at (9.22,3.2) {)};\node  at (8.73,3.07) {$\ ,$};
\node  at (8.4,2) {(};\draw[fill] (8.6,2.1) circle [radius=0.08];\draw[fill] (8.6,1.9) circle [radius=0.08];\draw[fill] (9,2) circle [radius=0.08];
\draw[fill] (9.2,2) circle [radius=0.08];\node  at (9.4,2) {)};\node  at (8.73,1.87) {$\ ,$};
\draw [thick] (6.76,0.12) -- (8.3,0.7);\draw [thick] (6.76,-0.1) -- (8.3,-0.65);\draw [thick] (6.7,1.9) -- (8.3,0.8);\draw [thick] (6.7,2) -- (8.3,2);
\draw [thick] (6.7,2.1) -- (8.3,3.1);\draw [thick] (6.9,-1.9) -- (8.3,0.6);\draw [thick] (6.9,-2) -- (8.3,-2);\draw [thick] (6.9,-2.1) -- (8.3,-3.2);
\node  at (1.1,0.3) {$0$};\node  at (3,0.8) {$-1$};\node  at (3,-0.7) {$1$};\node  at (5.1,1.8) {$\delta$};\node  at (5.1,-1.8) {$\delta$};
\node  at (5.2,0.8) {$1$};\node  at (5.33,-0.75) {$-1$};\node  at (7.2,2.95) {$\delta-1$};\node  at (7.7,2.22) {$\delta+1$};\node  at (7.75,1.47) {$1$};
\node  at (7.35,0.65) {$\delta$};\node  at (7.43,-0.12) {$0$};\node  at (7.7,-1.76) {$\delta+1$};\node  at (7.3,-2.95) {$\delta-1$};\node  at (6.9,-1.05) {$-1$};
\end{tikzpicture}
\caption{Paths and contents} \label{fig:2}
\end{figure}

Clearly, a path $T$ can be reconstructed from its sequence of contents. 

\vskip .2cm
We encode the sequence $T$ of bipartitions in the following way. We first associate to the sequence $T$ three Young diagrams   
$\lambda'(T)$, $\nu(T)$ and $\lambda''(T)$ such that $\nu(T)\subseteq \lambda'(T)$. The  
diagram $\lambda'(T):=\lambda_L^{(r)}$ is the left diagram in the bipartition $\Lambda^{(r)}$. 
The diagram $\nu(T):=\lambda_L^{(r+s)}$ is the left final diagram and the diagram
$\lambda''(T):=\lambda_R^{(r+s)}$ is the right final diagram in the bipartition $\Lambda^{(r+s)}$. We call $$D_T  :=[\lambda'(T),\,\nu(T),\,\lambda''(T) ]$$ the {\it triple diagram}
corresponding to the path $T$.

\vskip .2cm
Next, we fill the boxes of the diagrams  $\lambda'(T)$, $\lambda''(T)$ and the set-theoretical difference $\lambda'(T)\setminus\nu(T)$.
Exactly as for the symmetric group, the boxes of the diagram $\lambda'(T)$ are filled with numbers $1,\dots ,r$, 
representing the order in which the boxes were added in the sequence $(\lambda_L^{(0)},\dots,\lambda_L^{(r)})$. The boxes of the union 
$\bigl( \lambda'(T)\setminus\nu(T)\bigr)\sqcup\lambda''(T)$ are filled with numbers $r+1,\dots ,r+s$ 
in the order in which the boxes were removed or added in the sequence $(\Lambda^{(r+1)},\dots,\Lambda^{(r+s)})$. The resulting filling we call the 
{\it standard triple tableau} $W_T$ corresponding to the path $T$.

\vskip .2cm
It is straightforward to see that the correspondence between the set of all paths and the set of all standard triple tableaux is one to one. 

\vskip .2cm
We visualize the standard walled tableau by putting the numbers corresponding to the filling of $\lambda'(T)$ in the upper left corner of boxes and the numbers
corresponding to the filling of $\lambda'(T)\setminus\nu(T)$ in the lower right corners. This should be clear on the following example of a path for the 
algebra ${\sf B}_{3,5}(\delta)$.

\begin{example}\label{exseq}
For the sequence $T$
\ytableausetup{boxsize=0.5em}
\begin{align*}
  \Big[\!\big(\!\varnothing,\varnothing\!\big),\big(\ydiagram{1},\varnothing\!\big),\big(\ydiagram{2},\varnothing\!\big),\big(\ydiagram{2,1},\varnothing\!\big),\big(\ydiagram{2,1},\ydiagram{1}\big),\big(\ydiagram{2,1},\ydiagram{2}\big),\big(\ydiagram{1,1},\ydiagram{2}\big),\big(\ydiagram{1},\ydiagram{2}\big),\big(\ydiagram{1},\ydiagram{2,1}\big)\Big]
\end{align*}
the corresponding triple diagram is $[(2,1),\,(1),\,(2,1) ]$ and the standard triple tableau $W_T$ is
\ytableausetup{boxsize=1.3em}
\begin{align*}
\Big[\,
\begin{ytableau}
^1_{\quad}  & ^2_{\,\,\,\,\, 6}\\
 ^3_{\,\,\,\,\, 7}
\end{ytableau},\,\,
\begin{ytableau}
4 & 5\\
8 
\end{ytableau}\,\Big] \ .
\end{align*}
\end{example}

The contents of boxes in the sets $\lambda'(T)$, $\lambda'(T)\setminus\nu(T)$ and $\lambda''(T)$ are calculated according to the formulas \eqref{c1},
\eqref{c3} and \eqref{c2} respectively. The content of the box occupied by the number $j$ in the standard triple diagram corresponding to a path $T$  will be denoted by $c_j(T)$.

\vskip .2cm
Let $T=(\Lambda^{(0)},\dots,\Lambda^{(r+s)})$, $\Lambda^{(r+s)}=\Lambda$, be a standard walled $\Lambda$-tableau. 
The vector $v_T$ is the common eigenvector for the Jucys-Murphy elements $x_j$, $j=1,\ldots ,r+s$, and the eigenvalues are precisely the contents,
\[x_jv_T=c_j(T)v_T\ ,\ j=1,\ldots ,r+s\ .\] 

We recall some standard facts valid in the situation when the branching rules are simple and the vectors $v_T$ are common eigenvectors of a set of elements generating a maximal 
commutative subalgebra. 

\vskip .2cm
Since the vectors $v_T$, $T\in\mathcal{T}_\Lambda$, form a basis in $V_\Lambda$, we have the complete set $\{ E_T\}$ of primitive idempotents  
in $\text{Mat}(V_\Lambda)$; the operator $E_T$ is the projector on the one-dimensional subspace $V_T$ along the subspace of codimension one spanned by the vectors $v_{T'}$, $T'\in\mathcal{T}_\Lambda\backslash \{ T\}$.  
The primitive idempotent $E_{T}$, corresponding to the vector $v_T$, satisfies 
\begin{equation}\label{xiet}
x_t\, E_{T}=E_{T}\, x_t=c_t(T) E_{T}\ , \ \ t=1,\dots,r+s\ .
\end{equation}

Consider the standard walled  tableau $U=(\Lambda^{(0)},\dots,\Lambda^{(r+s-1)})$; recall that we assume that $s>0$. 
Let $\alpha\in R\bigl(\nu(U)\bigr)\sqcup A\bigl(\lambda''(U)\bigr)$ be the box of $W_T$ occupied by the number $r+s$. By construction, we have
$$E_{T}=E_{U}\,
\frac{(x_{r+s}-a_1)\dots (x_{r+s}-a_\ell)}{(c_{r+s}(T)-a_1)\dots (c_{r+s}(T)-a_\ell)}\ ,$$ 
where $a_1,\dots,a_\ell$ are the contents of all boxes in $\biggl(R\bigl(\nu(U)\bigr)\sqcup A\bigl(\lambda''(U)\bigr)\biggr)\setminus\{\alpha\}$. 

\vskip .2cm
The elements $\{E_{T}\}$ for $T$ a standard  walled  $\Lambda$-tableau, $\Lambda\in\mathcal{D}_{r,s}$, form a complete set of pairwise orthogonal primitive idempotents for ${\sf B}_{r,s}(\delta)$. 

\vskip .2cm
We have also the following relation: 
\begin{equation}\label{jmform1}
E_{T}=E_{U} \frac{u-c_{r+s}}{u-x_{r+s}}\Big|^{}_{u=c_{r+s}},
\end{equation}
where $u$ is a complex variable. Indeed, the actions of the right and left hand sides of (\ref{jmform1}) on the vectors 
$v_W$, $W\in\mathcal{T}_\Lambda$, coincide. 

\begin{remark} The walled Brauer algebras resemble the symmetric groups in certain aspects (like the dimension, the length generating function for words \cite{BGO})
so it is natural to expect that the representation theory of the tower of the walled Brauer algebras  can be built in the frame of the inductive approach of 
Okounkov--Vershik to the representation theory of the symmetric groups \cite{OV}, see also \cite{IO} for the Hecke algebras, 
\cite{IO2} for the Birman--Murakami--Wenzl algebras and \cite{OP,OP2} for the cyclotomic algebras. 
\end{remark}

\subsection{Spectral parameters}
Denote, for $i\ne j$,
$$s_{i,j}(u):=1-\frac{s_{i,j}}{u}\ \  \text{ if }\ \varepsilon(i)+\varepsilon(j) \ \text{ is even}\ ,$$
$$d_{i,j}(u):=1-\frac{d_{i,j}}{u}\ \ \text{ if }\ \varepsilon(i)+\varepsilon(j) \ \text{ is odd}\ ,$$
where the function $\varepsilon$ is defined by (\ref{funepsi}). 
Let $w_{i,j}(u)$ be, depending on $\varepsilon(i)+\varepsilon(j) $, either $s_{i,j}(u)$ or $d_{i,j}(u)$. If the indices $i,j,k,l$ are pairwise distinct then 
$$ w_{i,j}(u)w_{k,l}(v)=w_{k,l}(v)w_{i,j}(u)\ .$$
We have
\begin{equation}
s_{i,j}(u)s_{i,j}(-u)=\frac{u^2-1}{u^2}\ ,\label{unitarm1}
\end{equation}
and
\begin{equation}\label{unid}
d_{i,j}(u)d_{i,j}(\delta -u)=1\ .
\end{equation}
The functions $s_{i,j}(u)$ satisfy the Yang-Baxter equation with the spectral parameter
\begin{align}\label{ybsp1}
s_{i,j}(u)s_{i,k}(u+v)s_{j,k}(v)&=s_{j,k}(v)s_{i,k}(u+v)s_{i,j}(u)\ ,
\end{align}
with pairwise distinct indices $i,j,k$. 

\vskip .2cm
Additionally, we have ($i\neq j\neq k\neq i$)
\begin{align}\label{ybsp3}
d_{j,i}(u)d_{k,i}(u-v)s_{j,k}(v)&=s_{j,k}(v)d_{k,i}(u-v)d_{j,i}(u)\ ,
\end{align}
and
\begin{align}\label{ybsp5}
d_{i,j}(u)s_{i,k}(\delta -u-v)d_{k,j}(v)=d_{k,j}(v)s_{i,k}(\delta -u-v)d_{i,j}(u)\ .
\end{align}

\begin{remark}
Equations (\ref{ybsp1}), (\ref{ybsp3}) and (\ref{ybsp5}) can be elegantly written in a uniform manner. Let
$$\tilde{w}_{i,j}(u):=\left\{\begin{array}{ll} s_{i,j}(u)&\text{ if }\ \varepsilon(i)+\varepsilon(j) \ \text{ is even}\ ,\\[.2em] 
d_{i,j}(\delta/2-u)&\text{ if }\ \varepsilon(i)+\varepsilon(j) \ \text{ is odd}\ . \end{array}\right.$$
Then 
$$\tilde{w}_{i,j}(u)\tilde{w}_{i,k}(u+v)\tilde{w}_{j,k}(v)=\tilde{w}_{j,k}(v)\tilde{w}_{i,k}(u+v)\tilde{w}_{i,j}(u)$$
whenever $i\neq j\neq k\neq i$.
\end{remark}

\section{Main results}\label{maosrere}
The main result of this paper is the construction of the complete set $\{ E_T\}$, see (\ref{xiet}), of pairwise orthogonal primitive idempotents for the walled Brauer algebra ${\sf B}_{r,s}(\delta)$. 

\subsection{Fusion procedure}\label{fuprose}
The original fusion procedure for the Brauer algebra was given in \cite{IM}. In this section we formulate its analogue for the walled Brauer algebra. The proof is in Section
\ref{promath}.

\vskip .2cm
In what follows we let $$n=r+s\ .$$

Consider the rational function, in variables $u_1,\dots,u_{n}$, with values in the walled Brauer algebra ${\sf B}_{r,s}(\delta)$:
\begin{equation}\label{psimain}
\Psi_{r,s}:=\mathfrak{D}_{r,s}\, \mathfrak{S}_r\, \mathring{\mathfrak{S}}_s\ ,
\end{equation}
where
$$\mathfrak{D}_{r,s}:=\prod_{\substack{ 1\leqslant i\leqslant r \\ r+1\leqslant j\leqslant n}} d_{i,j}(u_i+u_j)$$
and
\begin{equation}\label{sbars}\mathfrak{S}_r:=\prod_{\substack{ 1\leqslant i<j \leqslant r}}
s_{i,j}(u_i-u_j)\ \ ,\ \ \mathring{\mathfrak{S}}_s:=\prod_{\substack{ r+1\leqslant i<j\leqslant n}} {s}_{i,j}(u_i-u_j)\ .\end{equation}
The products in the definitions of $\mathfrak{D}_{r,s}$, $\mathfrak{S}_r$ and $\mathring{\mathfrak{S}}_s$ are calculated in the lexicographical order on the pairs $(i,j)$ (that is, $(i_1,j_1)$ precedes $(i_2,j_2)$ if $i_1 < i_2$ or $i_1 = i_2$ and $j_1 < j_2$). 

\vskip .2cm
Let $T=(\Lambda^{(0)},\dots,\Lambda^{(n)})$, $\Lambda^{(n)}=\Lambda\in\mathcal{D}_{r,s}$, be a standard walled $\Lambda$-tableau
describing a path in the Bratteli diagram for the walled Brauer algebra ${\sf B}_{r,s}(\delta)$. 
We define the rational function in the variables $u_1,\dots,u_{n}$:
\begin{equation}\label{predf}
z_T := \prod_{i=1}^{n}\ \frac{u_i-c_i}{u_i-\delta\,\varepsilon (i)}\ \times
\prod_{\substack{   1\leqslant j<i\leqslant r \\ \text{or}\\   r<j<i\leqslant n  }}\frac{(u_i-u_j)^2}{(u_i-u_j)^2-1} \ ,\end{equation}
where the function $\varepsilon$ is defined by (\ref{funepsi}), and for brevity we denoted $c_i=c_i(T)$, $i=1,\dots,n$.

\vskip .2cm
Set
\begin{equation}\label{fufufu} \Psi_T(u_1,\dots,u_{n}):=z_T  \cdot  \Psi_{r,s} \ .\end{equation}
\begin{theorem}\label{thm:fus}
The primitive idempotent $E_T$, corresponding to the standard walled $\Lambda$-tableau $T$, is found by
the consecutive evaluations
$$E_T=\Psi_T(u_1,\dots,u_{n})\big|_{u_1=c_1}\big|_{u_2=c_2}\dots \big|_{u_{n}=c_{n}}\ .$$
\end{theorem}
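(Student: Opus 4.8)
The plan is to argue by induction on $s$, peeling off the top (across-the-wall) box of the tower \eqref{ascala}. The base case $s=0$ is the algebra ${\sf B}_{r,0}(\delta)\cong\mathbb{C}[\mathbb{S}_r]$, where $\Psi_{r,s}$ reduces to $\mathfrak{S}_r$ and the statement is the fusion procedure for the symmetric group \cite{Mo} (the prefactor \eqref{predf} specializing to its normalization). For the inductive step, fix a standard walled $\Lambda$-tableau $T$ of length $n=r+s$ with $s\geqslant1$, and let $U\nearrow T$ be obtained by deleting the last box; since $n>r$, this last box always crosses the wall, so $\varepsilon(n)=1$. By the inductive hypothesis the fusion procedure holds for $\mathsf{A}_{n-1}\cong{\sf B}_{r,s-1}(\delta)$, i.e. $z_U\Psi_{r,s-1}$ produces $E_U$ under the consecutive evaluations at $u_1=c_1,\dots,u_{n-1}=c_{n-1}$.

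First I would establish the factorization, as an identity of rational functions with values in the algebra,
$$\Psi_{r,s}(u_1,\dots,u_n)=\Psi_{r,s-1}(u_1,\dots,u_{n-1})\cdot\Phi_n,\qquad
\Phi_n=\prod_{i=1}^{r}d_{i,n}(u_i+u_n)\prod_{i=r+1}^{n-1}s_{i,n}(u_i-u_n),$$
collecting on the right all factors carrying the variable $u_n$. This is where the Yang--Baxter relations enter: factors with disjoint index sets commute, while \eqref{ybsp1}, \eqref{ybsp3} and \eqref{ybsp5} transport the index-$n$ factors through $\mathfrak{S}_r$, $\mathring{\mathfrak{S}}_s$ and the remaining factors of $\mathfrak{D}_{r,s}$. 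The specific spectral arguments $u_i+u_j$ for the $d$'s and $u_i-u_j$ for the $s$'s are exactly those making the parameters in \eqref{ybsp3}, \eqref{ybsp5} consistent (for instance $d_{i,n}(u_i+u_n)\,d_{k,n}(u_k+u_n)\,s_{i,k}(u_i-u_k)$ has the form required there), so the reorganization goes through.

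Next I would split the prefactor as $z_T=z_U\cdot\rho(u_n)$, where, using $\varepsilon(n)=1$,
$$\rho(u_n)=\frac{u_n-c_n}{u_n-\delta}\prod_{j=r+1}^{n-1}\frac{(u_n-u_j)^2}{(u_n-u_j)^2-1}\ .$$
Because $\Phi_n$ is regular at $u_i=c_i$ for $i<n$ (its poles sit at $u_i=\pm u_n$), the consecutive evaluation of $z_T\Psi_{r,s}$ at $u_1=c_1,\dots,u_{n-1}=c_{n-1}$ is finite and, applying the inductive hypothesis to the left factor $z_U\Psi_{r,s-1}$, equals
$$E_U\cdot\rho(u_n)\,\Phi_n(u_n),\qquad \Phi_n(u_n)=\prod_{i=1}^{r}d_{i,n}(c_i+u_n)\prod_{i=r+1}^{n-1}s_{i,n}(c_i-u_n)\ .$$

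The crux, and the step I expect to be the main obstacle, is then the single-variable operator identity
$$E_U\,\rho(u_n)\,\Phi_n(u_n)=E_U\,\frac{u_n-c_n}{u_n-x_n}\ ;$$
granting it, the final evaluation at $u_n=c_n$ gives $E_T$ by \eqref{jmform1}, completing the induction. The essential point is that the \emph{non-commuting} product $\Phi_n(u_n)$, once multiplied on the left by $E_U$, collapses to $E_U$ times a rational function of the single Jucys--Murphy element $x_n=-\sum_{i=1}^{r}d_{i,n}+\sum_{i=r+1}^{n-1}s_{i,n}+\delta$, whose summands are precisely the generators entering $\Phi_n$; equivalently, $E_U\rho(u_n)\Phi_n(u_n)$ is diagonalized by the basis $\{v_W\}$, $U\nearrow W$, of $E_UV_\Lambda$, acting on $V_W$ as $\frac{u_n-c_n}{u_n-c_n(W)}$. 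I would prove this by a recursion over the factors of $\Phi_n$, moving $E_U$ (which commutes with $\mathsf{A}_{n-1}$) through them with the help of the unitarity relations \eqref{unitarm1}, \eqref{unid} and the braid relations \eqref{ybsp3}, \eqref{ybsp5}; the two admissible last-box types — a box added to the second diagram with content \eqref{c2} and a box removed from the first with content \eqref{c3} — produce the two families of eigenvalues $c_n(W)$ and must be handled in parallel. The companion requirement, that every intermediate consecutive evaluation be finite, is exactly what the pole-cancelling factors $\tfrac{u_i-c_i}{u_i-\delta\varepsilon(i)}$ and $\tfrac{(u_i-u_j)^2}{(u_i-u_j)^2-1}$ in $z_T$ are engineered to guarantee, again via \eqref{unitarm1} and \eqref{unid}.
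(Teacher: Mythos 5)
Your overall architecture does coincide with the paper's: factor out of $\Psi_{r,s}$ the terms carrying $u_n$ (the paper's Lemma \ref{psiind}), reduce everything to the last evaluation, and finish with the single-variable identity together with \eqref{jmform1}. But there is a genuine gap exactly at the step you yourself call the crux, and in fact that step is \emph{false} in the form you state it. The point is that the $d_{i,n}$ all share the index $n$ and do not commute with one another, so the order of the $d$-factors matters. The Yang--Baxter relation \eqref{ybsp3} \emph{reverses} the order of the two $d$'s whenever an $s$-factor is transported through them; consequently what comes out of pulling the index-$n$ factors through $\mathfrak{S}_r$ is the decreasing product $d_{r,n}(u_r+u_n)\cdots d_{1,n}(u_1+u_n)$ (the paper's $\psi_n$), not your increasing product $d_{1,n}\cdots d_{r,n}$. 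With your ordering, neither the factorization $\Psi_{r,s}=\Psi_{r,s-1}\Phi_n$ nor the crux identity holds. Concretely, already in ${\sf B}_{2,1}(\delta)$, take $E_U=\frac{1}{2}(1+s_{1,2})$, $c_1=0$, $c_2=1$; using the diagram identities $d_{1,3}d_{2,3}=s_{1,2}d_{2,3}$ and $d_{2,3}d_{1,3}=s_{1,2}d_{1,3}$ one gets
$$E_U\bigl(d_{1,3}(c_1+u)\,d_{2,3}(c_2+u)-d_{2,3}(c_2+u)\,d_{1,3}(c_1+u)\bigr)=\frac{E_U\,(d_{2,3}-d_{1,3})}{u(u+1)}\ \neq\ 0\ ,$$
a nonzero combination of four distinct diagrams; only the reversed product satisfies $E_U\,d_{2,3}(c_2+u)d_{1,3}(c_1+u)=E_U\,\frac{u-\delta}{u-x_3}$.

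Even granting the corrected ordering, the crux identity is where essentially all the content of the paper's proof lives, and your proposal only gestures at it. The sketched mechanism cannot work as described: $E_U$ does \emph{not} commute with $\mathsf{A}_{n-1}$ (it is a non-central element of $\mathsf{A}_{n-1}$; what commutes with $\mathsf{A}_{n-1}$ is the Jucys--Murphy element $x_n$), and unitarity \eqref{unitarm1}, \eqref{unid} plus the braid relations alone are blind to the contraction relations of the walled Brauer algebra, which are precisely what force $E_U\cdot(\text{product})$ to collapse onto a rational function of $x_n$. The paper's proof is a double induction: Lemma \ref{wacro} (induction on $r$) treats the wall crossing and uses in an essential way the relation $d_{r,r+1}d_{j,r+1}=d_{r,r+1}s_{j,r}$, which converts $d$-type summands into $s$-type ones and thereby reassembles $x_{r+1}=\delta-\sum_i d_{i,r+1}$ inside the computation; Lemma \ref{wacro2} (induction on $s$) propagates this beyond the wall, and Lemma \ref{lem:jmsi} then converts the result into the form needed for \eqref{jmform1} via the unitarity relations. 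Your proposal needs these lemmas (with the correct factor ordering) and provides no substitute for them, so as it stands it asserts, rather than proves, the theorem's essential point.
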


\vskip .2cm
\begin{example}\label{exidem}
For $r=s=2$, let $T$ be the standard walled  tableau corresponding to the contents sequence $(0,-1,1,0)$, see Figure \ref{fig:2}. We have
$$\begin{array}{rcl}
 \Psi_{2,2}(0,u_2,u_3,u_4)&\!\!\!=\!\!\!&\displaystyle{ \left(\!1-\frac{d_{1,3}}{u_3}\right) \left(\!1-\frac{d_{1,4}}{u_4}\right) 
\left(\!1-\frac{d}{u_2+u_3}\right) \left(\!1-\frac{d_{2,4}}{u_2+u_4}\right) }\\[2em] 
&&\times\ \displaystyle{  \left(\!1+\frac{s_1}{u_2}\right) \left(\!1-\frac{s_3}{u_3-u_4}\right)  }\ .\end{array}$$
This expression has singularities at $u_3=-u_2$ or $u_4=0$. However in the process of the consecutive evaluations of the product of $\Psi_{2,2}(u_1,u_2,u_3,u_4)$ with the 
prefactor $z_T$ the singularities cancel and we find
$$E_T=\frac{1}{2\delta (\delta-1)}\, (1-s_1)\cdot ds_1 s_3 d\cdot  (1-s_1)\ .$$
\end{example}

\begin{remark}
The function
\begin{equation}\label{nafu}
\prod_{1\leqslant i< j\leqslant n} d_{i,j}(u_i+u_j)\ \times \prod_{1\leqslant i< j\leqslant n} s_{i,j}(u_i-u_j)
\end{equation}
for the Brauer algebra ${\sf B}_{n}(\delta)$ was suggested in \cite{Na2}. Note that the function $\Psi_{r,s}$, defined in (\ref{psimain}), can be obtained by dropping, in the expression
(\ref{nafu}), factors corresponding to the diagrams which do not exist in the walled Brauer algebra. Thus the function 
$\Psi_{r,s}$ makes sense in the Brauer algebra ${\sf B}_{n}(\delta)$ and the consecutive evaluations of the product 
$z_T  \cdot  \Psi_{r,s}$ give rise to certain idempotents of the Brauer algebra
${\sf B}_{n}(\delta)$. It would be interesting to understand the representation-theoretic/combinatorial meaning of these idempotents.
\end{remark}

\subsubsection{Reformulation of Theorem \ref{thm:fus}} \label{reform3}
As we already stressed, we are interested in the fusion procedure only after the wall crossing so we shall accordingly change the notation. 
A standard walled $\Lambda$-tableau $T=(\Lambda^{(0)},\dots,\Lambda^{(r+s)})$, $\Lambda^{(r+s)}=\Lambda$, 
will be denoted by $T=(T_r,\Lambda^{(r+1)},\dots,\Lambda^{(r+s)})$ where $T_r=(\Lambda^{(0)},\dots,\Lambda^{(r)})$
is the standard Young tableau of shape $\lambda'(T)$. We fix the tableau $T$ till the end of Section and set $c_i=c_i(T)$. 

\vskip .2cm
For $j$ such that $r+s\geqslant j>r$ let
$$\mathfrak{d}_j^{\downarrow}:=d_{r,j}(u_{r}+u_j)d_{r-1,j}(u_{r-1}+u_j)\ldots d_{1,j}(u_{1}+u_j)\ $$
and
$$\mathring{\mathfrak{d}}_j^{\downarrow}:=\mathfrak{d}_j^{\downarrow}\big|_{u_{1}=c_{1}}\dots\big|_{u_{r}=c_{r}}=d_{r,j}(c_{r}+u_j)d_{r-1,j}(c_{r-1}+u_j)\ldots 
d_{1,j}(c_{1}+u_j)\ .$$
The symbol $\mathring{}$ (it appeared already in (\ref{psimain})) over a letter signifies that we are dealing with 
a rational function which depends on the variables $u_{r+1},\dots,u_n$ only.

\vskip .2cm
With the help of the equalities (\ref{ybsp3}), see Section \ref{promath}, one finds
$$\mathfrak{D}_{r,s}\, \mathfrak{S}_r=\mathfrak{S}_r \, \mathfrak{d}_{r+1}^{\downarrow}\mathfrak{d}_{r+2}^{\downarrow}\ldots \mathfrak{d}_n^{\downarrow}\ .$$

The fusion procedure of \cite {Mo} for the symmetric group says that the primitive idempotent $E_{T_r}$ corresponding to the standard tableau 
$T_r$ of the symmetric group
$\mathbb{S}_r$ is obtained by the consecutive evaluations
$$E_{T_r}=\left(\prod_{i=1}^{r}\ \frac{u_i-c_i}{u_i}\ \times
\prod_{ 1\leqslant j<i\leqslant r}\frac{(u_i-u_j)^2}{(u_i-u_j)^2-1}\cdot \mathfrak{S}_r\right)\bigg|_{u_{1}=c_{1}}\dots\bigg|_{u_{r}=c_{r}} \ .$$
The part of the prefactor $z_T$, see (\ref{predf}), which corresponds to the after-wall tail $(\ldots,\Lambda^{(r+1)},\dots,\Lambda^{(r+s)})$ of the tableau $T$, is the rational function in the variables $u_{r+1},\dots,u_{n}$:
$$\mathring{z}_T := \prod_{i=1}^{n}\ \frac{u_i-c_i}{u_i-\delta}\ \times
\prod_{  r<j<i\leqslant n }\frac{(u_i-u_j)^2}{(u_i-u_j)^2-1} \ .$$
Let now
$$\mathring{\Psi}_{n;T_r}(u_{r+1},\dots,u_{n}):=E_{T_r}\mathring{\mathfrak{d}}_{r+1}^{\downarrow}\mathring{\mathfrak{d}}_{r+2}^{\downarrow}\ldots \mathring{\mathfrak{d}}_n^{\downarrow}\,  \mathring{\mathfrak{S}}_s$$
and
$$\mathring{\Psi}_{T}(u_{r+1},\dots,u_{n}):=\mathring{z}_{T}  \cdot  \mathring{\Psi}_{n;T_r}(u_{r+1},\dots,u_{n})\ .$$
We reformulate Theorem \ref{thm:fus} in the following way: {\it the idempotent $E_T$ is found by the consecutive evaluations}
$$E_T=\mathring{\Psi}_{T}(u_{r+1},\dots,u_{n})\big|_{u_{r+1}=c_{r+1}}\dots\big|_{u_{n}=c_{n}} \ .$$

\subsection{Exponents}
The function $\Psi_T$ whose consecutive evaluations give the primitive idempotents, has two parts: the function $z_T$, defined in (\ref{predf}) and the function $\Psi_{r,s}$,  
defined in (\ref{psimain}). The consecutive evaluations of the function $\Psi_{r,s}$ itself may lead to a zero or infinite result. The presence of the prefactor   
$z_T$ cures this undesirable effect. One can locate the minimal collection of the factors in $z_T$ which are needed to provide a finite result. In \cite{IM}, Isaev and Molev
introduced the `exponents' which control such factors for the Brauer algebra. In this Section we adapt the exponents to the situation of the walled Brauer algebra.  
The exponents for the walled Brauer algebra have simpler form than for the full Brauer algebra. 

\vskip .2cm
We need a general definition. Fix a Young diagram $\gamma$. We denote by $g_{\gamma}(k)$ the number of cells on the $k$-th diagonal of $\gamma$. We shall define a function $\vartheta_{\gamma}\colon \mathbb{Z}\to\{ -1,0,1\}$ by the following rules.
\begin{itemize}
\item If, adding to $\gamma$ the $\left( g_{\gamma}(k)+1\right)$-st cell to the $k$-th diagonal, we obtain a Young diagram then we set  $\vartheta_{\gamma}(k)=-1$.
\item If, removing from $\gamma$ the $g_{\gamma}(k)$-st cell on the $k$-th diagonal, we obtain a Young diagram then we set  $\vartheta_{\gamma}(k)=1$.
\item Otherwise, $\vartheta_{\gamma}(k)=0$. 
\end{itemize}

Let $T=(\Lambda^{(0)},\dots,\Lambda^{(n)})$ be a standard walled tableau. We will define the exponents $p_t(T)$, $t=1,\dots,n$. 
If $\Lambda^{(t)}$ is obtained from $\Lambda^{(t-1)}$ by adding a box to the first or second diagram in the bipartition then we let  $p_t(T)=0$. Assume now that 
$\Lambda^{(t)}$ is obtained from $\Lambda^{(t-1)}$ by removing a box $\alpha=(i,j)$ from the first diagram in the bipartition. In particular, at this moment the wall is 
already crossed so the diagram $\lambda'(T)$ is defined. The exponents of $T$ are defined by  
\begin{equation}\label{deexpn}p_t(T):=\vartheta_{\lambda'(T)}(i-j)\ .\end{equation}
We introduce now the rational function
$${\sf ze}_T:=\prod_{i=r+1}^n (u_i-c_i)^{p_i(T)}\ .$$
\begin{proposition}\label{proex} In the notation of Section \ref{fuprose}, the consecutive evaluations of the function 
$${\sf ze}_T\cdot \Psi_{r,s}$$
are finite.

\vskip .2cm\noindent
If $p_i(T)\neq 0$ then the factor $(u_i-c_i)^{p_i(T)}$ is present in $z_T$. The consecutive evaluations of the product $z_T/{\sf ze}_T$ (of all other factors) are finite.  
\end{proposition}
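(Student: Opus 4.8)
The plan is to localise the entire question at the removal steps that occur after the wall, using the rewriting of Section~\ref{reform3}, and then to match, step by step, the order of vanishing of $\Psi_{r,s}$ against the exponents $p_t(T)$.

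First I would reduce to the after-wall evaluations. Writing $\Psi_{r,s}=\mathfrak{S}_r\,\mathfrak{d}_{r+1}^{\downarrow}\cdots\mathfrak{d}_{n}^{\downarrow}\,\mathring{\mathfrak{S}}_s$ and carrying out the first $r$ evaluations, the symmetric-group factor $\mathfrak{S}_r$ has finite consecutive evaluation equal to a nonzero multiple of $E_{T_r}$ by the fusion procedure of \cite{Mo}; since $p_t(T)=0$ for $t\le r$, the factor ${\sf ze}_T$ is silent here and this part is automatically finite. The problem then reduces to the evaluations $u_{r+1}=c_{r+1},\dots,u_n=c_n$ applied to $E_{T_r}\,\mathring{\mathfrak{d}}_{r+1}^{\downarrow}\cdots\mathring{\mathfrak{d}}_{n}^{\downarrow}\,\mathring{\mathfrak{S}}_s$. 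At a step $t$ which adds a box to the second diagram one has $p_t(T)=0$, and the only possible singularity comes from the subsequence of such additions inside $\mathring{\mathfrak{S}}_s$, finite again by \cite{Mo}; moreover the contraction factors $d_{m,t}(c_m+u_t)$ stay regular at $u_t=c_t$ because, for generic $\delta$, the content $c_t=(j-i)+\delta$ of an addition is never equal to $-c_m$. Hence neither $\mathring{\mathfrak{S}}_s$ nor the before-wall part needs any factor of ${\sf ze}_T$, and everything concentrates at the removal steps, where ${\sf ze}_T$ has its factors.

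The heart of the argument is the analysis at a removal step $t$, where a box $\alpha=(a,b)$ leaves the first diagram, so $c_t=a-b$ and $p_t(T)=\vartheta_{\lambda'(T)}(c_t)$. The only factors depending on $u_t$ that can be singular at $u_t=c_t$ are the $d_{m,t}(c_m+u_t)$ with $c_m=b-a$, that is, those indexed by the left boxes $m$ lying on the diagonal $c_t$; there are $g_{\lambda'(T)}(c_t)$ of them. I would proceed by induction on the removal steps, keeping as hypothesis that the partial consecutive evaluation is a nonzero scalar times the primitive idempotent $E_U$ of the truncated tableau $U$. Moving the regular contraction factors aside by the Yang--Baxter relations \eqref{ybsp3}, \eqref{ybsp5}, pairing the coincident poles through the unitarity relation \eqref{unid}, and evaluating the residue by the action of $x_t$ on $E_U$ via \eqref{jmform1}, one collapses the a~priori pole of order up to $g_{\lambda'(T)}(c_t)$ into a net order in $u_t-c_t$ equal to $-\vartheta_{\lambda'(T)}(c_t)=-p_t(T)$; multiplying by $(u_t-c_t)^{p_t(T)}$ then restores a finite nonzero limit and reproduces the induction hypothesis, which proves the first assertion. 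I expect this collapse to be the main obstacle: one must show that the many coincident contraction poles, seen through $E_U$, conspire to leave exactly the discrete Laplacian of the diagonal-length function $g_{\lambda'(T)}$, namely $\vartheta_{\lambda'(T)}(c_t)\in\{-1,0,1\}$, which is where the simplification of the walled case over the full Brauer case of \cite{IM} is used.

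The remaining assertions are combinatorial and by inspection. If $p_t(T)=1$, the factor $(u_t-c_t)$ is the numerator of $\dfrac{u_t-c_t}{u_t-\delta}$ in $z_T$. If $p_t(T)=-1$, the diagonal $c_t$ of $\lambda'(T)$ is extendable, which forces both neighbours $(a+1,b)$ and $(a,b+1)$ of $\alpha$ to belong to $\lambda'(T)$; since $\alpha$ must be a removable corner at step $t$, these two cells are removed at earlier after-wall steps, with removal contents $c_t+1$ and $c_t-1$. The corresponding factor $\dfrac{1}{(u_t-u_j)^2-1}$ of $z_T$ then carries the simple pole $(u_t-c_t)^{-1}$ at $u_j=c_j$, so $(u_t-c_t)^{-1}$ is genuinely present in $z_T$. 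Finally, the quotient $z_T/{\sf ze}_T$ is a product of scalar rational functions with finite consecutive evaluations: the surviving factors $\dfrac{1}{u_i-\delta\varepsilon(i)}$ are regular at the contents for generic $\delta$, while the surviving factors $\dfrac{(u_i-u_j)^2}{(u_i-u_j)^2-1}$ together with the numerators $u_i-c_i$ organise into the before-wall and second-diagram symmetric-group normalisations of \cite{Mo}, the only poles withdrawn being those transferred to ${\sf ze}_T$. Combining the three points exhibits ${\sf ze}_T$ as the minimal collection of factors of $z_T$ making the consecutive evaluations of ${\sf ze}_T\,\Psi_{r,s}$ finite.
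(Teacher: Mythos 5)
Your reduction to the after-wall steps, the genericity argument killing the removal--addition interactions, and the count of $g_{\lambda'(T)}(\cdot)$ dangerous contraction factors at a removal step are all sound, and you even land on the correct combinatorial answer (the discrete Laplacian of the diagonal-length function). But the heart of the proposition is exactly the step you yourself flag as ``the main obstacle'' and then do not carry out: proving that the coincident poles of the \emph{algebra-valued} partial evaluation, seen through $E_U$, collapse to a net order in $u_t-c_t$ equal to $-p_t(T)$. Announcing that Yang--Baxter moves, the unitarity relation (\ref{unid}) and the eigenvalue action of $x_t$ will produce this collapse is a plan, not a proof; as written, the first assertion of Proposition \ref{proex} remains unproved. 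The same gap infects your final paragraph: finiteness of the consecutive evaluations of $z_T/{\sf ze}_T$ requires the \emph{exact} net vanishing order of the evaluated scalar at each removal step --- all previously removed boxes on the diagonals of contents $c_t$ and $c_t\pm1$ contribute, not only the two neighbours of $\alpha$ --- and these factors live on the skew shape $\lambda'(U)\setminus\nu(U)$, so they do not ``organise into the symmetric-group normalisations of \cite{Mo}''; Molev's result does not apply to a skew shape.

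The idea you are missing is that no algebra-valued residue analysis is needed at all. Since Theorem \ref{thm:fus} is already proved, the last evaluation of $z_T\cdot\Psi_{r,s}=z_U\, z_U^T\,\Psi_{r,s}$ is not merely finite but equals $E_T\neq 0$; writing the partially evaluated product as $\bigl(z_U^T\big|_{u_1=c_1,\dots,u_{n-1}=c_{n-1}}\bigr)\cdot E_U\psi_n^U$ as in (\ref{jztu}), nonvanishing of the limit forces the order of $E_U\psi_n^U$ in $(u_n-c_n)$ to be exactly \emph{minus} the vanishing order of the explicit scalar function $z_U^T\big|_{u_1=c_1,\dots,u_{n-1}=c_{n-1}}$. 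Both assertions of the proposition (and the nonzero limit needed to run the induction to the next step) thus reduce to a purely scalar, combinatorial computation: show that this vanishing order equals $p_n(T)$. That computation is done by grouping the factors of $\varpi_U(u)=\prod_{i=r+1}^{n-1}\frac{(u-c_i)^2}{(u-c_i)^2-1}$ along diagonals, which turns $\varpi_U$ into $\prod_{k}(u+k)^{\Delta_{\lambda'(U)\setminus\nu(U)}(k)}\cdot\prod_k(u-k-\delta)^{\Delta_{\lambda''(U)}(k)}$ with $\Delta_\gamma(k)=2g_\gamma(k)-g_\gamma(k+1)-g_\gamma(k-1)$, followed by a three-case inspection of the position of the diagonal of the removed box relative to $\nu(U)$ and to the border of $\lambda'$: the values $\Delta=-2,0,-1$ give, after adding the $+1$ coming from the numerator $u_n-c_n$, precisely the exponents $p_n(T)=-1,1,0$ of (\ref{deexpn}). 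To rescue your proposal, replace the uncompleted ``collapse'' argument by this transfer of the whole question to the scalar prefactor; otherwise you must genuinely perform the residue computation you only sketch.
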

Note that before crossing the wall the exponents are equal to zero. This follows from the fusion procedure for the symmetric group \cite{Mo}, see also \cite{IM}. 

\vskip .2cm
The proof of Proposition \ref{proex} is in Section \ref{secproex}.

\subsection{Second fusion procedure}
Our second fusion procedure resembles the fusion procedure of \cite{IMO} for the Brauer algebra. 
\paragraph{Modified baxterized elements.} Let $\hh$ be an indeterminate.
It will be convenient to use also the following modified functions:
\begin{align*}
s'_{i,j}(u;\hh)&:=1+\frac{s_{i,j}}{u-\hh}\ \  \text{ if }\ \varepsilon(i)+\varepsilon(j) \ \text{ is even}\ ,\\
d'_{i,j}(u;\hh)&:=1+\frac{d_{i,j}}{u+\hh-\delta}\ \  \text{ if }\ \varepsilon(i)+\varepsilon(j) \ \text{ is odd}\ .
\end{align*}
In virtue of the equalities (\ref{ybsp1}), we have
$$s'_{i,j}(u;\hh)s'_{i,k}(u-v;\hh)s_{j,k}(v)=s_{j,k}(v)s'_{i,k}(u-v;\hh)s'_{i,j}(u;\hh)\ ,$$
in virtue of the equalities  (\ref{ybsp3}),
$$d'_{j,i}(v;\hh)d'_{k,i}(u+v;\hh)s_{j,k}(u)=s_{j,k}(u)d'_{k,i}(u+v;\hh)d'_{j,i}(v;\hh)\ ,$$
and, in virtue of the equalities  (\ref{ybsp5}),
$$d_{i,j}(u)s'_{i,k}(u-v;\hh)d'_{k,j}(v;\hh)=d'_{k,j}(v;\hh)s'_{i,k}(u-v;\hh)d_{i,j}(u)\ .$$
In the sequel we omit the symbol $\hh$ in the notation for brevity.

\paragraph{Second fusion procedure.} We shall formulate the results in the notation of Subsection \ref{reform3}.

\vskip .2cm
We define several more rational functions, in variables $u_{r+1},\dots,u_{n}$, with values in the algebra ${\sf B}_{r,s}(\delta)$.
First, for $j$ such that $r+s\geqslant j>r$ let
$$\mathring{\mathfrak{d}}_j^{'\uparrow}:=d_{1,j}'(c_{1}-u_j)d_{2,j}'(c_{2}-u_j)\ldots d_{r,j}'(c_{r}-u_j)\ .$$
Next, let
$$\mathring{\mathfrak{A}}_{r,s}:=\mathring{\mathfrak{d}}_{r+1}^{\downarrow}\mathring{\mathfrak{d}}_{r+2}^{\downarrow}\ldots \mathring{\mathfrak{d}}_n^{\downarrow}\ ,\ 
\mathring{\mathfrak{A}}_{r,s}':=\mathring{\mathfrak{d}}_{r+1}^{'\uparrow}\mathring{\mathfrak{d}}_{r+2}^{'\uparrow}\ldots \mathring{\mathfrak{d}}_n^{'\uparrow}\ .$$
Finally, let
$$\begin{array}{c}
\mathring{\Psi}_{n;T_r,h}(u_{r+1},\dots,u_{n}):=E_{T_r}\mathring{\mathfrak{A}}_{r,s}'\, \mathring{\mathfrak{S}}_s'\, \mathring{\mathfrak{A}}_{r,s}\, 
\mathring{\mathfrak{S}}_s\ ,\\[.6em]
\mathring{\widetilde\Psi}_{n;T_r,h}(u_{r+1},\dots,u_{n}):=E_{T_r}\mathring{\mathfrak{A}}_{r,s}\, \mathring{\mathfrak{S}}_s'\, \mathring{\mathfrak{A}}_{r,s}'\, \mathring{\mathfrak{S}}_s\ ,\end{array}$$
where $\mathring{\mathfrak{S}}_s$ is defined in (\ref{sbars}) and
$$\mathring{\mathfrak{S}}_s' :=\prod_{\substack{ r+1\leqslant i<j\leqslant r+s}} {s}_{i,j}'(u_i+u_j)\ .$$
The prefactor $\mathring{z}_T$ we replace with the following rational function in the variables $u_{r+1},\dots,u_{n}$:
\begin{equation}\label{nopref}\mathring{z}_{T;h}:=\prod_{i=r+1}^{n}\frac{(u_i-c_i)(u_i-h+\delta)}{(u_i-\delta )(u_i+c_i-h)}\, \times
\prod_{r<j<i\leqslant n  }\frac{(u_i-u_j)^2}{(u_i-u_j)^2-1}\ .\end{equation}
Set
$$\begin{array}{c} 
\mathring{\Psi}_{T;h}(u_{r+1},\dots,u_{n}):=\mathring{z}_{T;h}  \cdot  \mathring{\Psi}_{n;T_r,h}(u_{r+1},\dots,u_{n}) \ ,\\[.6em] 
\mathring{\widetilde{\Psi}}_{T;h}(u_{r+1},\dots,u_{n}):=\mathring{z}_{T;h} \cdot \mathring{\widetilde{\Psi}}_{n;T_r,h}(u_{r+1},\dots,u_{n}) \ .
\end{array}$$

\begin{proposition}\label{prop:fusb2}
The primitive idempotent $E_T$, corresponding to the standard walled $\Lambda$-tableau $T$, is found by any of 
the consecutive evaluations
\begin{equation}\label{fubtve1} 
E_T=\mathring{\Psi}_{T;h}(u_{r+1},\dots,u_{n})\big|_{u_{r+1}=c_{r+1}}\dots\big|_{u_{n}=c_{n}} \end{equation}
or
\begin{equation}\label{fubtve2}
E_T=\mathring{\widetilde{\Psi}}_{T;h}(u_{r+1},\dots,u_{n})\big|_{u_{r+1}=c_{r+1}}\dots\big|_{u_{n}=c_{n}}\ .\end{equation}
\end{proposition}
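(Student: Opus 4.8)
The plan is to deduce Proposition~\ref{prop:fusb2} by the inductive mechanism that already underlies the reformulation of Theorem~\ref{thm:fus} in Section~\ref{reform3}, now carried out for the doubled product $\mathring{\mathfrak{A}}_{r,s}'\mathring{\mathfrak{S}}_s'\mathring{\mathfrak{A}}_{r,s}\mathring{\mathfrak{S}}_s$ rather than for the single block $\mathring{\mathfrak{A}}_{r,s}\mathring{\mathfrak{S}}_s$. The starting observation is that the primed elements are the reflections of the unprimed ones: directly from the definitions one reads off $s'_{i,j}(u)=s_{i,j}(h-u)$ and $d'_{i,j}(u)=d_{i,j}(\delta-h-u)$. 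Consequently the unitarity relations (\ref{unitarm1}) and (\ref{unid}) translate into the mixed identities $d_{i,j}(u)\,d'_{i,j}(u-h)=1$ and $s_{i,j}(u)\,s'_{i,j}(u+h)=\tfrac{u^2-1}{u^2}$, while the three modified Yang--Baxter relations displayed just before the statement are exactly the relations needed to commute the four blocks $\mathring{\mathfrak{A}}_{r,s}'$, $\mathring{\mathfrak{S}}_s'$, $\mathring{\mathfrak{A}}_{r,s}$, $\mathring{\mathfrak{S}}_s$ past one another. Since $E_{T_r}$ is Molev's symmetric-group idempotent, the induction is on the number $s$ of steps after the wall, the base case $s=0$ being the identity $\mathring{\Psi}_{n;T_r,h}=E_{T_r}=E_T$.

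For the inductive step I would fix the last variable $u_n=u_{r+s}$ and collect, using the Yang--Baxter relations above together with the commutativity $w_{i,j}(u)w_{k,l}(v)=w_{k,l}(v)w_{i,j}(u)$ for disjoint indices, all factors depending on $u_n$. These are the last factor $\mathring{\mathfrak{d}}_n^{'\uparrow}$ of $\mathring{\mathfrak{A}}_{r,s}'$, the factors $s'_{i,n}(u_i+u_n)$ of $\mathring{\mathfrak{S}}_s'$, the last factor $\mathring{\mathfrak{d}}_n^{\downarrow}$ of $\mathring{\mathfrak{A}}_{r,s}$, and the factors $s_{i,n}(u_i-u_n)$ of $\mathring{\mathfrak{S}}_s$. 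After moving them to the right, $\mathring{\Psi}_{n;T_r,h}$ factors as the same function $\mathring{\Psi}_{n-1;T_r,h}$ for the tableau $U$ obtained by dropping $\Lambda^{(n)}$, times an explicit rational block $B_n(u_n)$ assembled from the gathered factors. By the induction hypothesis the first $s-1$ consecutive evaluations of $\mathring{z}$ times $\mathring{\Psi}_{n-1;T_r,h}$ produce $E_U$, so what remains is to evaluate at $u_n=c_n$ the product of $E_U$, the block $B_n(u_n)$, and the $u_n$-part $\frac{(u_n-c_n)(u_n-h+\delta)}{(u_n-\delta)(u_n+c_n-h)}\prod_{r<j<n}\frac{(u_n-u_j)^2}{(u_n-u_j)^2-1}$ of $\mathring{z}_{T;h}$.

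The heart of the argument is to show that this last evaluation reproduces the Jucys--Murphy factor $\frac{u-c_n}{u-x_n}\big|_{u=c_n}$, so that (\ref{jmform1}) yields $E_U\cdot\frac{u-c_n}{u-x_n}\big|_{u=c_n}=E_T$. Here I would use the eigenvector property $x_jv_W=c_j(W)v_W$ and compute the action of $E_U\,B_n(u_n)$ on each basis vector $v_W$ with $U\nearrow W$, expressing every $s_{i,n}$- and $d_{i,n}$-factor through the contents; the mixed unitarity identities $d_{i,j}(u)d'_{i,j}(u-h)=1$ and $s_{i,j}(u)s'_{i,j}(u+h)=\tfrac{u^2-1}{u^2}$ collapse the primed/unprimed pairs, and the factor $\frac{u_n-h+\delta}{u_n+c_n-h}$ of $\mathring{z}_{T;h}$ is precisely the one that removes the residual $h$-dependence, leaving a result that is independent of $h$ and equal to $E_T$. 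Finiteness of the consecutive evaluations is established in parallel: as in Proposition~\ref{proex}, every pole produced by the gathered block is cancelled by $\mathring{z}_{T;h}$, the extra numerator $u_n-h+\delta$ and denominator $u_n+c_n-h$ being tailored to the poles of the primed factors, which is why no separate exponent analysis is needed for this version.

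Finally, the tilde version (\ref{fubtve2}) follows along the same lines after interchanging the roles of the primed and unprimed $\mathfrak{d}$-blocks; alternatively one applies the anti-automorphism $\iota$ of (\ref{antiai}), under which $E_T$ is fixed and the order of the four blocks is reversed, to pass from one statement to the other. The step I expect to be the main obstacle is the bookkeeping in the inductive factorization: verifying that the Yang--Baxter and commutation moves genuinely isolate a single $u_n$-block $B_n(u_n)$ with no surviving cross-terms, and then checking in detail that $B_n(u_n)$ together with the tuned prefactor collapses to the Jucys--Murphy factor with no residual $h$. This is exactly the place where the precise shape of $\mathring{z}_{T;h}$, the mixed unitarity identities, and the three modified Yang--Baxter relations must be used in concert.
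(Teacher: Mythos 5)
Your overall architecture is the same as the paper's: isolate the $u_n$-dependent block via a recursion $\mathring{\Psi}_{n;T_r,h}=\mathring{\Psi}_{n-1;T_r,h}\cdot\mathring{s}_n^{'\downarrow}\mathring{\mathfrak{d}}_n^{'\uparrow}\mathring{\mathfrak{d}}_n^{\downarrow}\mathring{s}_n^{\uparrow}$, reduce everything to the last evaluation $u_n=c_n$, and deduce (\ref{fubtve2}) from (\ref{fubtve1}) by the anti-automorphism $\iota$. However, the step you yourself flag as the heart of the matter --- evaluating $E_U\cdot B_n(u_n)$ --- is where your proposed mechanism breaks down, in two ways. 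First, the ``mixed unitarity'' collapse cannot occur: in the gathered block the primed factor with indices $(i,n)$ appears as $d'_{i,n}(c_i-u_n)=d_{i,n}(\delta-h-c_i+u_n)$ while the unprimed one appears as $d_{i,n}(c_i+u_n)$, and a product $d_{i,n}(a)d_{i,n}(b)$ is scalar only when $a+b=\delta$; here $a+b=\delta-h+2u_n$, so the pair collapses only at the single point $u_n=h/2$ (and similarly for the $s$-pairs). Generically nothing cancels pairwise, so the block is genuinely an operator, not a scalar times a remnant. Second, computing the action of $E_U B_n(u_n)$ on the basis vectors $v_W$ is not available as stated: the individual factors $s_{i,n}$, $d_{i,n}$ are \emph{not} diagonal in that basis (only the Jucys--Murphy elements are), so ``expressing every $s_{i,n}$- and $d_{i,n}$-factor through the contents'' would require explicit seminormal matrix elements, which neither you nor the paper supplies.

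The missing idea is that the reflection identities you correctly wrote down, $s'_{i,j}(u)=s_{i,j}(h-u)$ and $d'_{i,j}(u)=d_{i,j}(\delta-h-u)$, are exactly what lets you reuse Lemma \ref{wacro2} with the shifted spectral parameter $u=h-u_n$: after the evaluations $u_{r+1}=c_{r+1},\dots,u_{n-1}=c_{n-1}$, the primed block $\mathring{s}_n^{'\downarrow}\mathring{\mathfrak{d}}_n^{'\uparrow}$ is precisely $\zeta_n\vert_{u=h-u_n}$, whence $E_U\cdot\bigl(\mathring{s}_n^{'\downarrow}\mathring{\mathfrak{d}}_n^{'\uparrow}\bigr)=\frac{h-u_n-x_n}{h-u_n-\delta}\,E_U$. (Note this requires the $s'$-factors to precede the $d'$-factors, and the whole primed block to stand to the left of the unprimed one --- an ordering your sketch leaves unspecified.) Then Theorem \ref{thm:fus} applies verbatim to what remains: $z_U^T\, E_U\,\mathring{\mathfrak{d}}_n^{\downarrow}\mathring{s}_n^{\uparrow}\big|_{u_n=c_n}=E_T$. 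Finally $x_nE_T=c_nE_T$ turns the leftover operator factor into the scalar $\frac{h-2c_n}{h-c_n-\delta}$, which is cancelled exactly by the extra factor $\frac{u_n-h+\delta}{u_n+c_n-h}\big|_{u_n=c_n}=\frac{c_n-h+\delta}{2c_n-h}$ of $\mathring{z}_{T;h}$; this, and not a pairwise primed/unprimed cancellation, is how the $h$-dependence disappears. So your plan needs to be repaired by replacing the collapse-plus-eigenvector computation with this two-stage use of Lemma \ref{wacro2} and Theorem \ref{thm:fus}.
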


\vskip .2cm
\begin{remark} The fusion procedure of Theorem \ref{thm:fus} is the limit, as $h$ tends to infinity, of the second fusion procedure, given in Proposition
\ref{prop:fusb2}.\end{remark}

\section{Proofs}
\subsection{Proof of Theorem \ref{thm:fus}}\label{promath}
We repeat that we assume that $s>0$ because before crossing the wall our formulas reproduce the formulas for the symmetric groups from \cite{Mo}. 

\vskip .2cm
We shall often write $d_{i,j}(u,v)$ and $s_{i,j}(u,v)$ instead of $d_{i,j}(u+v)$ and 
$s_{i,j}(u-v)$.  

\vskip .2cm
We rewrite the function $\Psi_{r,s}$, defined by \eqref{psimain}, in the form adapted to the consecutive evaluations. Let 
\begin{equation}\label{depsi}\psi_n:=d_{r,n}(u_r,u_n)\dots d_{1,n}(u_1,u_n)\cdot {s}_{r+1,n}(u_{r+1},u_n)\dots {s}_{n-1,n}(u_{n-1},u_n)\ .\end{equation}
\begin{lemma}\label{psiind} We have 
$$\Psi_{r,s}=\Psi_{r,s-1}\cdot \psi_n\ .$$
\end{lemma}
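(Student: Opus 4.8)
The plan is to isolate, inside $\Psi_{r,s}=\mathfrak{D}_{r,s}\,\mathfrak{S}_r\,\mathring{\mathfrak{S}}_s$ (see (\ref{psimain}), (\ref{sbars})), all baxterized factors carrying the index $n$, to drag them to the right, and to show that they reassemble precisely into $\psi_n$ of (\ref{depsi}) while the remaining factors form $\Psi_{r,s-1}$. First I would dispose of the two parts that require only commutativity of disjoint-index baxterized elements. In the lexicographically ordered product $\mathring{\mathfrak{S}}_s$, each factor $s_{i,n}(u_i-u_n)$ can be pushed to the right past every factor $s_{i',j'}(u_{i'}-u_{j'})$ with $i<i'<j'\leqslant n-1$, since then $\{i,n\}\cap\{i',j'\}=\varnothing$; stopping each such factor just before $s_{i+1,n}$ gathers them, in their original order, at the end, giving
\[
\mathring{\mathfrak{S}}_s=\mathring{\mathfrak{S}}_{s-1}\,\sigma_n,\qquad \sigma_n:={s}_{r+1,n}(u_{r+1}-u_n)\cdots {s}_{n-1,n}(u_{n-1}-u_n).
\]
The identical argument applied to $\mathfrak{D}_{r,s}$ (each $d_{i,n}(u_i+u_n)$ commutes past $d_{i+1,j'}(u_{i+1}+u_{j'})$ with $j'\leqslant n-1$, as $\{i,n\}\cap\{i+1,j'\}=\varnothing$) yields
\[
\mathfrak{D}_{r,s}=\mathfrak{D}_{r,s-1}\,\delta_n^{\uparrow},\qquad \delta_n^{\uparrow}:=d_{1,n}(u_1+u_n)\cdots d_{r,n}(u_r+u_n).
\]

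The essential step, and the only one using a genuine Yang--Baxter relation, is the \emph{reversal identity}
\[
\delta_n^{\uparrow}\,\mathfrak{S}_r=\mathfrak{S}_r\,\delta_n^{\downarrow},\qquad \delta_n^{\downarrow}:=d_{r,n}(u_r+u_n)\cdots d_{1,n}(u_1+u_n),
\]
so that $\psi_n=\delta_n^{\downarrow}\sigma_n$. The key elementary move is the specialization of (\ref{ybsp3}) with $i=n$, $j=a$, $k=b$, $u=u_a+u_n$, $v=u_a-u_b$, namely
\[
d_{a,n}(u_a+u_n)\,d_{b,n}(u_b+u_n)\,s_{a,b}(u_a-u_b)=s_{a,b}(u_a-u_b)\,d_{b,n}(u_b+u_n)\,d_{a,n}(u_a+u_n),
\]
valid for $a<b\leqslant r$, which transposes two neighbouring $d_{\cdot,n}$ strands at the cost of one factor $s_{a,b}(u_a-u_b)$ of $\mathfrak{S}_r$. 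Since $\mathfrak{S}_r$ carries exactly $\binom{r}{2}$ factors in lexicographic order, matching the $\binom{r}{2}$ transpositions needed to reverse $(1,2,\dots,r)$, I would prove the reversal identity by induction on $r$: writing $\mathfrak{S}_r=\mathfrak{S}_{r-1}\,s_{1,r}(u_1-u_r)\cdots s_{r-1,r}(u_{r-1}-u_r)$ and $\delta_n^{\uparrow}=\delta_{n,r-1}^{\uparrow}\,d_{r,n}(u_r+u_n)$, one uses that $d_{r,n}$ commutes with $\mathfrak{S}_{r-1}$ (disjoint indices), applies the inductive hypothesis, and then clears the remaining letters by repeated use of the boxed move together with the disjoint-index commutations (the cases $r=2,3$ already exhibit the full pattern).

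Finally I would reassemble: combining the three preceding facts,
\[
\Psi_{r,s}=\mathfrak{D}_{r,s-1}\,\delta_n^{\uparrow}\,\mathfrak{S}_r\,\mathring{\mathfrak{S}}_{s-1}\,\sigma_n
=\mathfrak{D}_{r,s-1}\,\mathfrak{S}_r\,\delta_n^{\downarrow}\,\mathring{\mathfrak{S}}_{s-1}\,\sigma_n,
\]
and since $\delta_n^{\downarrow}$ involves only the indices $\{1,\dots,r,n\}$ while $\mathring{\mathfrak{S}}_{s-1}$ involves only $\{r+1,\dots,n-1\}$, these two commute, giving $\mathfrak{D}_{r,s-1}\,\mathfrak{S}_r\,\mathring{\mathfrak{S}}_{s-1}\,\delta_n^{\downarrow}\,\sigma_n=\Psi_{r,s-1}\,\psi_n$. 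The main obstacle is the reversal identity: everything else is bookkeeping with disjoint-index commutations, whereas this step is where the braid/Yang--Baxter content of (\ref{ybsp3}) is genuinely consumed, and care is needed to track the spectral-parameter arguments and the ordering conventions so that the factors produced are exactly those appearing in $\mathfrak{S}_r$.
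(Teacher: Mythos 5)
Your proof is correct and follows essentially the same route as the paper's: the same factorizations $\mathfrak{D}_{r,s}=\mathfrak{D}_{r,s-1}\,\delta_n^{\uparrow}$ and $\mathring{\mathfrak{S}}_s=\mathring{\mathfrak{S}}_{s-1}\,\sigma_n$, the reversal identity $\delta_n^{\uparrow}\,\mathfrak{S}_r=\mathfrak{S}_r\,\delta_n^{\downarrow}$ obtained from (\ref{ybsp3}), and the final disjoint-index commutation of $\delta_n^{\downarrow}$ past $\mathring{\mathfrak{S}}_{s-1}$. The only difference is one of detail: you spell out the induction on $r$ behind the reversal identity, which the paper simply asserts as a consequence of the Yang--Baxter equations.
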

\begin{proof} Clearly,  $\mathfrak{D}_{r,s}=\mathfrak{D}_{r,s-1}\cdot d_{1,n}(u_1,u_n)\dots d_{r,n}(u_r,u_n)$.
The Yang--Baxter equations (\ref{ybsp3}) imply the identity
$$d_{1,n}(u_1,u_n)\dots d_{r,n}(u_r,u_n)\cdot \mathfrak{S}_r=\mathfrak{S}_r\cdot d_{r,n}(u_r,u_n)\dots d_{1,n}(u_1,u_n)\ .$$
The well-known equality
$\mathring{\mathfrak{S}}_s=\mathring{\mathfrak{S}}_{s-1}\cdot {s}_{r+1,n}(u_{r+1},u_n)\dots {s}_{n-1,n}(u_{n-1},u_n)$
and the commutativity relation 
$$d_{r,n}(u_r,u_n)\dots d_{1,n}(u_1,u_n)\cdot \mathring{\mathfrak{S}}_{s-1}=\mathring{\mathfrak{S}}_{s-1}\cdot d_{r,n}(u_r,u_n)\dots d_{1,n}(u_1,u_n)\ ,$$
complete the proof.\end{proof}
 
\vskip .2cm  
We first analyze what happens when  we cross the wall.

\begin{lemma}\label{wacro} Let $U$ be a standard walled tableau for the algebra ${\sf B}_{r,0}(\delta)$, that is, for the symmetric group $\mathbb{S}_r$.
The following identity holds in the walled Brauer algebra ${\sf B}_{r,1}(\delta)$:
\begin{equation}\label{focrowa}E_U\cdot d_{1,r+1}(w-c_1) \ldots d_{r,r+1}(w-c_r)=\frac{w-\delta+x_{r+1}}{w}\cdot E_U\ ,\end{equation}
where $c_i=c_i(U)$, $i=1,\dots,r$.
\end{lemma}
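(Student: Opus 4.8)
The plan is to argue by induction on $r$, stripping the last ($i=r$) factor off the product. Throughout I write $e_i:=d_{i,r+1}$, so that $e_r=d_{r,r+1}=d$, and $y:=\sum_{i=1}^{r}e_i$. Since the Jucys--Murphy element satisfies $x_{r+1}=\delta-\sum_{i=1}^{r}d_{i,r+1}=\delta-y$ in ${\sf B}_{r,1}(\delta)$, the right-hand side of \eqref{focrowa} is $\tfrac{w-y}{w}\,E_U$. Moreover $y$ is central in $\mathbb{C}[\mathbb{S}_r]=\mathsf{A}_r$: any $\sigma\in\mathbb{S}_r$ fixes the point $r+1$ and hence conjugates the set $\{e_1,\dots,e_r\}$ among itself, $\sigma e_i\sigma^{-1}=e_{\sigma(i)}$, so $\sum_i e_i$ is $\sigma$-invariant and $E_Uy=yE_U$. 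The base case $r=1$ is immediate: there $E_U=1$, $c_1=0$, and both sides equal $1-d_{1,2}/w$.

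For the inductive step let $U^{-}$ be the standard Young tableau obtained from $U$ by deleting the box occupied by $r$; its contents are $c_1,\dots,c_{r-1}$. Inserting an inert vertical strand at the site $r$ identifies the subalgebra generated by the strands $1,\dots,r-1$ and the point $r+1$ with ${\sf B}_{r-1,1}(\delta)$, sending $d_{i,r}\mapsto d_{i,r+1}$ for $i<r$, so the induction hypothesis reads $E_{U^{-}}\prod_{i=1}^{r-1}e_i(w-c_i)=\tfrac{w-y^{-}}{w}\,E_{U^{-}}$ with $y^{-}:=\sum_{i=1}^{r-1}e_i$. Using the standard compatibility of idempotents along the chain, $E_U=E_UE_{U^{-}}=E_{U^{-}}E_U$ (because $E_U=E_{U^{-}}\,p(x_r)$ and $x_r$ commutes with $\mathbb{C}[\mathbb{S}_{r-1}]$), together with the centrality of $y^{-}$ in $\mathbb{C}[\mathbb{S}_{r-1}]$, I would insert $E_{U^{-}}$, apply the hypothesis to the first $r-1$ factors, and multiply by the remaining factor $e_r(w-c_r)=1-\tfrac{e_r}{w-c_r}$ on the right to get $\mathrm{LHS}=\tfrac1w\,E_U(w-y^{-})\bigl(1-\tfrac{e_r}{w-c_r}\bigr)$. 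Comparing with $\tfrac{w-y}{w}E_U=\tfrac1w\,E_U(w-y^{-}-e_r)$ and cancelling the common term $\tfrac1w E_U(w-y^{-})$, everything reduces to the single identity
\[
E_U\,y^{-}e_r=c_r\,E_U\,e_r .
\]

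The crux is therefore this last relation. To establish it I would use the diagrammatic identity $e_ie_r=s_{i,r}\,e_r$ for $i<r$, which follows from the conjugation $e_i=s_{i,r}e_rs_{i,r}$ together with $d\,s_{i,r}\,d=d$ (the latter is the defining relation $d\,s_{r-1}\,d=d$ for $i=r-1$, and a short diagram check — equivalently a consequence of the presentation — for $i<r-1$). Summing over $i<r$ gives $y^{-}e_r=\bigl(\sum_{i=1}^{r-1}s_{i,r}\bigr)e_r=x_r\,e_r$, where $x_r=\sum_{i=1}^{r-1}s_{i,r}$ is the symmetric-group Jucys--Murphy element. Finally $E_Ux_r=c_rE_U$ by the eigenvalue property \eqref{xiet} (the box occupied by $r$ has content $c_r$), whence $E_U\,y^{-}e_r=E_Ux_r\,e_r=c_rE_U\,e_r$, closing the induction.

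The hard part will be organising the reduction so that only the clean relation $y^{-}e_r=x_re_r$ survives. A naive factor-by-factor induction is not available, because the partial products $E_U\prod_{i=1}^{k}e_i(w-c_i)$ have no simple closed form for $k<r$ (the partial sums $\sum_{i\le k}e_i$ are not central), which is precisely why the induction must be on $r$ rather than on the number of factors. The remaining work is the careful verification of the auxiliary relations $e_ie_r=s_{i,r}e_r$ and $d\,s_{i,r}\,d=d$, the centrality of $y$ and $y^{-}$, and the transport of the induction hypothesis through the embedding ${\sf B}_{r-1,1}(\delta)\hookrightarrow{\sf B}_{r,1}(\delta)$; each of these is routine but should be checked explicitly.
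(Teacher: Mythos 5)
Your proof is correct, and it gets to the result by a genuinely lighter route than the paper's. Both arguments induct on $r$, transport the induction hypothesis through the embedding ${\sf B}_{r-1,1}(\delta)\hookrightarrow{\sf B}_{r,1}(\delta)$ given by the inert vertical strand at site $r$, and rest in the end on the same two facts: the diagram relation $d_{i,r+1}d_{r,r+1}=s_{i,r}d_{r,r+1}$, which turns $y^{-}=\sum_{i<r}d_{i,r+1}$ multiplied against $d_{r,r+1}$ into the symmetric-group Jucys--Murphy element $x_r$, and the eigenvalue property \eqref{xiet}. The difference is in how each argument gets into position to apply the induction hypothesis. The paper expresses $E_U$ through Molev's recursive fusion formula, $E_U=b(v)\,E_W\,s_{1,r}(c_1,v)\cdots s_{r-1,r}(c_{r-1},v)\big|_{v=c_r}$, and then needs the Yang--Baxter equations \eqref{ybsp3} to push the string $d_{1,r+1}(w-c_1)\cdots d_{r-1,r+1}(w-c_{r-1})$ through the baxterized factors, after which the leftover term is killed by $x_rE_U=c_rE_U$. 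You bypass both ingredients: the chain relation $E_U=E_UE_{U^{-}}$ together with the centrality of $y^{-}$ in $\mathbb{C}[\mathbb{S}_{r-1}]$ lets you invoke the hypothesis immediately and isolates the single identity $E_U\,y^{-}d_{r,r+1}=c_r\,E_U\,d_{r,r+1}$, which your relations $d_{i,r+1}d_{r,r+1}=s_{i,r}d_{r,r+1}$ and $d\,s_{i,r}\,d=d$ settle at once (both are correct; your version of the first is the image under the anti-automorphism $\iota$ of \eqref{antiai} of the relation $d_{r,r+1}d_{j,r+1}=d_{r,r+1}s_{j,r}$ used in the paper). What your route buys is economy and self-containedness: no spectral-parameter identities and no appeal inside this lemma to the symmetric-group fusion procedure of \cite{Mo}, only standard properties of idempotents along the chain plus two diagram checks. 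What the paper's route buys is uniformity: the same Yang--Baxter manipulations are the engine of Lemma \ref{wacro2} and of the proof of Theorem \ref{thm:fus}, so the paper rehearses exactly the technique it needs later anyway. The auxiliary verifications you flag at the end (centrality of $y$ and $y^{-}$, the relation $d\,s_{i,r}\,d=d$ for all $i<r$, and the transport of the hypothesis through the embedding) are indeed routine and all check out as you describe.
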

\begin{proof} The proof is by induction in $r$.  The induction base, for the algebra ${\sf B}_{1,1}(\delta)$, is straightforward.

\vskip .2cm
Let $W\nearrow U$. By the recursive construction of the primitive idempotents for the symmetric group in \cite{Mo} (which is exactly the `before the wall' part 
of our formula), we have  
$$E_U=b(v)\, E_W\cdot s_{1,r}(c_1,v)\ldots s_{r-1,r}(c_{r-1},v)\vert_{v=c_r}$$
with some rational function $b(v)$; its precise expression is not important at the moment. Let $\xi=s_{1,r}(c_1,v)\ldots s_{r-1,r}(c_{r-1},v)$. 
The Yang--Baxter equations (\ref{ybsp3}) imply that
$$\begin{array}{l}
\xi \cdot d_{1,r+1}(w-c_1) \ldots d_{r-1,r+1}(w-c_{r-1})\cdot d_{r,r+1}(w-v)\\[.6em]
\hspace{1cm}= d_{r,r+1}(w-v)\cdot d_{1,r+1}(w-c_1) \ldots d_{r-1,r+1}(w-c_{r-1})\cdot \xi\ .\end{array}$$
Since 
$$E_W\,d_{r,r+1}(w-v)=d_{r,r+1}(w-v)\,E_W\ ,$$
we can write the left hand side of (\ref{focrowa}) in the form
$$b(v)d_{r,r+1}(w-v)\, E_W\cdot d_{1,r+1}(w-c_1) \ldots d_{r-1,r+1}(w-c_{r-1})\cdot\xi\vert_{v=c_r}\ .$$
The diagrams with the vertical line connecting the $r$-th upper and lower points linearly span the subalgebra $\mathcal{A}$ in ${\sf B}_{r,1}(\delta)$, isomorphic to
${\sf B}_{r-1,1}(\delta)$. Thus we can use the induction hypothesis and write
$$E_W\cdot d_{1,r+1}(w-c_1) \ldots d_{r-1,r+1}(w-c_{r-1})\!=\!\frac{w-d_{1,r+1}-\dots -d_{r-1,r+1}}{w}\cdot E_W\ .$$
Now,
$$d_{r,r+1}(w-v)\cdot\frac{w-d_{1,r+1}-\dots -d_{r-1,r+1}}{w}$$
$$=\frac{w-d_{1,r+1}-\dots -d_{r-1,r+1}}{w}
-\frac{d_{r,r+1}\cdot (w-d_{1,r+1}-\dots -d_{r-1,r+1})}{w(w-v)}$$
$$=\frac{w-d_{1,r+1}-\dots -d_{r-1,r+1}}{w}-\frac{d_{r,r+1}\cdot (w-s_{1,r}-\dots -s_{r-1,r})}{w(w-v)}$$
$$=\frac{w-d_{1,r+1}-\dots -d_{r-1,r+1}}{w}-\frac{d_{r,r+1}\cdot (w-x_r)}{w(w-v)}$$
$$=\frac{w-\delta+x_{r+1}}{w}-\frac{d_{r,r+1}\cdot (v-x_r)}{w(w-v)}\ .$$
Here in the second equality we used the formula $d_{r,r+1}d_{j,r+1}=d_{r,r+1}s_{j,r}$. 

\vskip .2cm
We have
$$b(v)\,\frac{d_{r,r+1}\cdot (v-x_r)}{w(v-w)}\cdot E_W\cdot\xi\vert_{v=c_r}=\frac{d_{r,r+1}\cdot (v-x_r)}{w(v-w)}\cdot E_U\vert_{v=c_r}\ ,$$
so we are done since $x_rE_U=c_rE_U$.
\end{proof}
 
\begin{lemma}\label{wacro2} Let $U$ be a standard walled tableau for the algebra ${\sf B}_{r,s-1}(\delta)$ with $s>0$.
Then
\begin{equation}\label{focrowa1}E_U\cdot\zeta_n=\frac{u-x_{n}}{u-\delta}\cdot E_U\ ,\end{equation}
where $\zeta_n$ is the following rational function in the variable $u$:
$$\zeta_n={s}_{n-1,n}(u,c_{n-1})\dots {s}_{r+1,n}(u,c_{r+1})\cdot d_{1,n}(\delta-u,-c_1) \ldots d_{r,n}(\delta-u,-c_r)$$
with $c_i=c_i(U)$, $i=1,\dots,n-1$.
\end{lemma}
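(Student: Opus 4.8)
The plan is to mirror the structure of Lemma \ref{wacro}, proving the identity by induction on $s$, the number of sites already placed to the right of the wall in $U$. The statement of Lemma \ref{wacro} is precisely the base case $s=1$ (with $U$ a tableau for $\mathsf{B}_{r,0}(\delta)$ and a single factor group $d_{1,n}\ldots d_{r,n}$ appearing, no $\mathring{s}$-factors); there $\zeta_{r+1}=d_{1,r+1}(\delta-u,-c_1)\ldots d_{r,r+1}(\delta-u,-c_r)$ and the right-hand side reads $\frac{u-x_{r+1}}{u-\delta}E_U$, which matches \eqref{focrowa} after the change of spectral variable $w\leftrightarrow\delta-u$ and the unitarity \eqref{unid}. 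So the content is the inductive step.

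\textbf{Inductive step.} Assume the result for tableaux of length $r+s-2$ and let $W\nearrow U$, so that $W$ is a standard walled tableau for $\mathsf{B}_{r,s-2}(\delta)$ and $U$ is obtained by adjoining the entry $\Lambda^{(n-1)}$. By the fusion procedure after the wall (the reformulation in Section \ref{reform3}, itself built on Lemma \ref{wacro}), one can write $E_U = b\cdot E_W\cdot \eta\big|_{v=c_{n-1}}$ for a scalar-valued rational prefactor $b$ and $\eta$ a product of baxterized factors carrying the index $n-1$. The strategy is then exactly as in Lemma \ref{wacro}: use the Yang--Baxter relations \eqref{ybsp1}, \eqref{ybsp3} and \eqref{ybsp5} to commute the building block for the entry $n-1$ through $\zeta_n$, so that it lands next to $E_W$, then apply the induction hypothesis to collapse $E_W\cdot(\text{the }\zeta_{n}\text{-tail on indices }1,\dots,n-2)$ into a linear fraction in $u$ times $E_W$. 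The crucial commutation is that $\mathring{s}_{n-1,n}(u,c_{n-1})$ passes through the $d$-factors by \eqref{ybsp5} (this is the mixed relation $d\,s\,d=d\,s\,d$ with $s$ between the wall), while the $d_{r,n}$-type factors commute among the already-placed indices by \eqref{ybsp3}; the factor $E_W\,\mathring s_{n-1,n}$ or $E_W\,d_{*,n}$ commutes freely since the relevant diagram lives in a subalgebra isomorphic to $\mathsf{B}_{r,s-1}(\delta)$ on which $E_W$ is central-enough in the required sense.

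\textbf{The reduction.} After the induction hypothesis is invoked, the task reduces to an algebraic manipulation of a single baxterized factor against the linear fraction $\frac{u-(\text{partial sum of Jucys--Murphy summands})}{u-\delta}$, entirely parallel to the four displayed equalities in the proof of Lemma \ref{wacro}. Concretely, one computes $\mathring{s}_{n-1,n}(u,c_{n-1})\cdot\frac{u-y}{u-\delta}$ (respectively the $d$-analogue), where $y$ is the partial Jucys--Murphy sum on the indices $1,\dots,n-2$, using the defining relations, the identity $x_{n-1}E_U=c_{n-1}E_U$, and the recursive definition $x_n = -\sum_{i\le r} d_{i,n}+\sum_{r<i<n} s_{i,n}+\delta$. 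The spurious terms proportional to $(v-x_{n-1})$ vanish upon evaluation at $v=c_{n-1}$, exactly as the term $\frac{d_{r,r+1}(v-x_r)}{w(w-v)}$ vanishes at the end of Lemma \ref{wacro}.

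\textbf{Main obstacle.} The hard part will be bookkeeping the correct ordering and the precise spectral shifts when commuting $\mathring s_{n-1,n}$ and the $d_{i,n}$-factors past one another: the Yang--Baxter relations \eqref{ybsp3} and \eqref{ybsp5} each impose a rigid relation between the three spectral arguments, and the factor $\zeta_n$ mixes an $s$-block (indices $r+1,\dots,n-1$) with a $d$-block (indices $1,\dots,r$), so one must verify that the argument $\delta-u$ in the $d$-factors and $u$ in the $s$-factors are exactly the values making the braid moves applicable. The analogue of the key simplifying identity $d_{r,r+1}d_{j,r+1}=d_{r,r+1}s_{j,r}$ used in Lemma \ref{wacro} will reappear and must be tracked so that the emerging combination reassembles into $x_n$ rather than some unwanted partial sum. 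Once these commutations are organized, the collapse to $\frac{u-x_n}{u-\delta}E_U$ follows by the same telescoping as in the base case.
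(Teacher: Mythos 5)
Your induction skeleton (induction on $s$, with base case equal to Lemma \ref{wacro} under the substitution $w=\delta-u$) matches the paper's, and your closing step (killing the spurious term via $x_{n-1}E_U=c_{n-1}E_U$) is in the right spirit. But the core of your inductive step has a genuine gap, in fact two. First, circularity: you propose to write $E_U=b\,E_W\,\eta\big|_{v=c_{n-1}}$ ``by the fusion procedure after the wall (the reformulation in Section \ref{reform3})''. That reformulation \emph{is} Theorem \ref{thm:fus}, whose proof rests on Lemma \ref{lem:jmsi}, which in turn is an immediate consequence of Lemma \ref{wacro2} --- the very statement you are proving. Your parenthetical ``itself built on Lemma \ref{wacro}'' misreads the logical order: after the wall, the fusion expression of $E_U$ through $E_W$ is simply not available at this stage. (One could in principle repair this by a joint induction, proving Theorem \ref{thm:fus} for ${\sf B}_{r,s-1}(\delta)$ and the Lemma at length $n$ in tandem, but you would have to set that up explicitly, and you do not.) Second, the Yang--Baxter bookkeeping you defer as the ``main obstacle'' is not merely tedious; some of the commutations you assert are false as stated: \eqref{ybsp5} is a \emph{three}-factor braid relation, so a lone $s_{n-1,n}$ does not ``pass through the $d$-factors by \eqref{ybsp5}''; and $E_W$ does not commute with $d_{i,n}$ for $i\leqslant r$, since they share the index $i$ --- it commutes only with factors supported on the sites $n-1,n$.

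The paper's inductive step shows that none of this machinery is needed. Write $\zeta_n=s_{n-1,n}(u,c_{n-1})\,\zeta'_n$, where $\zeta'_n$ involves only the indices $1,\dots,n-2$ and $n$. Use $E_U=E_UE_W$ (standard for $W\nearrow U$) together with the fact that $E_W\in\mathsf{A}_{n-2}$ commutes with $s_{n-1,n}(u,c_{n-1})$ by disjointness of supports, to get $E_U\zeta_n=E_U\,s_{n-1,n}(u,c_{n-1})\,E_W\zeta'_n$. The induction hypothesis then applies inside the subalgebra $\mathcal{A}'\cong{\sf B}_{r,s-1}(\delta)$ of diagrams with a vertical strand at site $n-1$, giving $E_W\zeta'_n=E_W\rho_n$ with $\rho_n=\frac{u-x_n+s_{n-1,n}}{u-\delta}$. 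Finally, a two-line computation using $s_{n-1,n}\rho_n=\frac{u-x_{n-1}}{u-\delta}\,s_{n-1,n}$ and $E_Ux_{n-1}=c_{n-1}E_U$ collapses everything to $\frac{u-x_n}{u-\delta}\,E_U$. So no expression of $E_U$ through fusion and no Yang--Baxter relations enter the inductive step at all; your plan substitutes a circular ingredient plus an unfinished (and partly incorrect) commutation scheme for this elementary argument.
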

\begin{proof} We employ the induction on $s$. The induction base, for $s=1$ is the formula (\ref{focrowa}). Let $W\nearrow U$. We have $E_U=E_U E_W$. Thus 
$$E_U\cdot \zeta_n=E_U E_W\cdot {s}_{n-1,n}(u,c_{n-1})\zeta'_n=E_U\cdot  {s}_{n-1,n}(u,c_{n-1})\cdot  E_W\cdot \zeta'_n\ ,$$ 
where
$$\zeta'_n :={s}_{n-2,n}(u,c_{n-2})\dots{s}_{r+1,n}(u,c_{r+1})\cdot d_{1,n}(\delta-u,-c_1) \ldots d_{r,n}(\delta-u,-c_r)\ .$$
The diagrams with the vertical line connecting the $(n-1)$-st upper and lower points linearly span the subalgebra $\mathcal{A}'$ in ${\sf B}_{r,s}(\delta)$, isomorphic to
${\sf B}_{r,s-1}(\delta)$.
By the induction assumption, 
$$E_W\cdot \zeta'_n=E_W\rho_n\ ,$$
where
$$\rho_n :=\frac{u-(\delta-d_{1,n}-\dots -d_{r,n}+{s}_{r+1,n}+\dots +{s}_{n-2,n}) }{u-\delta}\ .$$
So,
$$E_U\cdot\zeta_n=E_U\cdot {s}_{n-1,n}(u,c_{n-1})\cdot E_W\cdot\rho_n=E_UE_W\cdot {s}_{n-1,n}(u,c_{n-1}) \rho_n$$
$$=E_U\cdot {s}_{n-1,n}(u,c_{n-1}) \rho_n\ .$$
Now,
\begin{equation}\label{focrowa2}{s}_{n-1,n}(u,c_{n-1}) \rho_n=\left(1-\frac{ {s}_{n-1,n}}{u-c_{n-1}}\right)  \rho_n
=  \rho_n-\frac{ {s}_{n-1,n}}{u-c_{n-1}}  \rho_n\ .\end{equation}
Since
$$\rho_n=\frac{u-x_n+{s}_{n-1,n} }{u-\delta}\ \text{ and }\  {s}_{n-1,n}\rho_n=\frac{u-x_{n-1} }{u-\delta} {s}_{n-1,n}\ ,$$
we rewrite the right hand side of the formula (\ref{focrowa2}) in the form
$$\frac{u-x_n+{s}_{n-1,n} }{u-\delta}-\frac{1}{u-c_{n-1}}\frac{u-x_{n-1} }{u-\delta} {s}_{n-1,n}$$
$$=\frac{u-x_n }{u-\delta}+\frac{1}{u-\delta}\left( 1-\frac{u-x_{n-1} }{u-c_{n-1}  }\right) {s}_{n-1,n}\ ,$$
which completes the proof since $E_U x_{n-1}=c_{n-1} E_U$. \end{proof}

\vskip .2cm
We define the following rational function in the variable $u=u_n$:
$$\psi_n^U:=\psi_n\vert_{u_1=c_1,\dots ,u_{n-1}=c_{n-1}}$$
where $\psi_n$ is defined in (\ref{depsi}).

\begin{lemma}\label{lem:jmsi}
The following identity holds:
\begin{equation}\label{jmid}
E_U\cdot \psi_n^U=\displaystyle{ \frac{u-\delta}{u-c_n} \prod_{i=r+1}^{n-1}\Big(1-\frac{1}{(u-c_i)^2}\Big) E_U \frac{u-c_{n}}{u-x_{n}} }\ .
\end{equation}
\end{lemma}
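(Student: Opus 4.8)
The plan is to recognize $\psi_n^U$ as a scalar multiple of the inverse of the function $\zeta_n$ from Lemma \ref{wacro2}, and then to transport the identity \eqref{focrowa1} to that inverse. First I would write out the specialization explicitly. Since $d_{i,n}(u_i,u_n)=d_{i,n}(u_i+u_n)$ and $s_{i,n}(u_i,u_n)=s_{i,n}(u_i-u_n)$, setting $u_i=c_i$ for $i<n$ and $u_n=u$ gives
$$\psi_n^U=d_{r,n}(c_r+u)\cdots d_{1,n}(c_1+u)\cdot s_{r+1,n}(c_{r+1}-u)\cdots s_{n-1,n}(c_{n-1}-u)\ ,$$
whereas the function of Lemma \ref{wacro2} reads
$$\zeta_n=s_{n-1,n}(u-c_{n-1})\cdots s_{r+1,n}(u-c_{r+1})\cdot d_{1,n}(\delta-u-c_1)\cdots d_{r,n}(\delta-u-c_r)\ .$$

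Next I would compute $\psi_n^U\zeta_n$ by telescoping from the inside. In $\psi_n^U\zeta_n$ the two $s$-blocks meet in the middle; by the unitarity relation \eqref{unitarm1} the innermost pair collapses, $s_{n-1,n}(c_{n-1}-u)s_{n-1,n}(u-c_{n-1})=1-(u-c_{n-1})^{-2}$, to a scalar, which is central and can be pulled out. Iterating, the whole $s$-part contracts to the scalar $P(u):=\prod_{i=r+1}^{n-1}\bigl(1-(u-c_i)^{-2}\bigr)$. The two $d$-blocks then become adjacent, and by the unitarity relation \eqref{unid}, $d_{i,n}(c_i+u)d_{i,n}(\delta-u-c_i)=1$, so they telescope to the identity. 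Hence $\psi_n^U\zeta_n=P(u)$, a central rational function.

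Because each baxterized factor is invertible as a rational function of $u$ — relation \eqref{unid} shows $d_{i,j}(w)^{-1}=d_{i,j}(\delta-w)$, and relation \eqref{unitarm1} gives $s_{i,j}(w)^{-1}=\tfrac{w^2}{w^2-1}s_{i,j}(-w)$ — the element $\zeta_n$ is invertible and $\psi_n^U=P(u)\,\zeta_n^{-1}$. Now I would invoke Lemma \ref{wacro2}: from $E_U\zeta_n=\tfrac{u-x_n}{u-\delta}E_U$, together with the fact that $x_n$ commutes with every element of $\mathsf{A}_{n-1}\ni E_U$ (so $\tfrac{u-x_n}{u-\delta}$ commutes with $E_U$ and is invertible), right-multiplication by $\zeta_n^{-1}$ and left-multiplication by $\tfrac{u-\delta}{u-x_n}$ give $E_U\zeta_n^{-1}=\tfrac{u-\delta}{u-x_n}E_U$. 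Multiplying by the central factor $P(u)$ then yields
$$E_U\psi_n^U=P(u)\,E_U\zeta_n^{-1}=\frac{u-\delta}{u-x_n}\,\prod_{i=r+1}^{n-1}\Bigl(1-\frac{1}{(u-c_i)^2}\Bigr)E_U\ ,$$
which is the asserted identity after writing $\tfrac{u-\delta}{u-x_n}=\tfrac{u-\delta}{u-c_n}\cdot\tfrac{u-c_n}{u-x_n}$ and using once more that $E_U$ commutes with $\tfrac{u-c_n}{u-x_n}$; here $c_n$ enters only as a formal bookkeeping parameter that cancels.

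The main obstacle is the bookkeeping in the telescoping step: one must track the $(u+v)$ versus $(u-v)$ conventions and the signs so that the arguments of the paired factors are related exactly by $w\leftrightarrow-w$ (for the $s$-factors) and $w\leftrightarrow\delta-w$ (for the $d$-factors); once the arguments match, \eqref{unitarm1} and \eqref{unid} apply verbatim and the centrality of each freshly produced scalar keeps the remaining factors in the right order. The only other points to state carefully are the invertibility of $\zeta_n$ and the commutation of $x_n$ with $E_U$, both immediate from the relations recalled above.
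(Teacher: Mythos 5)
Your proof is correct and follows essentially the same route as the paper, whose own proof is just the one-line remark that \eqref{jmid} follows from \eqref{focrowa1} by the unitarity relations \eqref{unitarm1} and \eqref{unid}; you have simply made explicit the intended mechanism, namely that $\psi_n^U\,\zeta_n$ telescopes to the scalar $\prod_{i=r+1}^{n-1}\bigl(1-(u-c_i)^{-2}\bigr)$ so that Lemma \ref{wacro2} can be transported to $\zeta_n^{-1}$, using the commutation of $x_n$ with $E_U\in\mathsf{A}_{n-1}$ to reorder the factors into the stated form.
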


\begin{proof} The formula (\ref{jmid}) is obtained from the formula (\ref{focrowa1}) by using the equalities (\ref{unitarm1}) and (\ref{unid}).
\end{proof}

\vskip .2cm
\noindent{\bf Proof} of Theorem \ref{thm:fus}. Clearly,
\begin{equation}\label{jztu}z_T=z_U z_U^T\ \ \ \text{where}\ \ \ 
z_U^T= \frac{u_n-c_n}{u_n-\delta}\prod_{i=r+1}^{n-1}\,\frac{(u_i-u_n)^2}{(u_i-u_n)^2-1}\ .\end{equation}
Thus,
$$\Psi_T(u_1,\dots,u_{r+s})\big|_{u_1=c_1}\big|_{u_2=c_2}\dots
\big|_{u_{n}=c_{n}}=
\left( z_U^T\big|_{u_1=c_1,\dots ,u_{n-1}=c_{n-1}}  E_U\psi_n^U\right)\big|_{u_n=c_n}$$
$$=E_U \frac{u-c_{n}}{u-x_{n}}\big|_{u_n=c_n}=E_T$$
by (\ref{jmform1}).\hspace{.4cm}\myblacksquare

\subsection{Proof of Proposition \ref{proex}}\label{secproex}
As for the fusion procedure itself, it is sufficient to analyze only the last evaluation $\ldots\big|_{u_n=c_n}$. Let $U\nearrow T$. Since the last evaluation of the product 
$\Psi_T(u_1,\dots,u_{n})=z_T  \cdot  \Psi_{r,s}$ is finite by Theorem \ref{thm:fus}, we only need to locate the dangerous factors (zeros and poles) in the function 
\begin{equation}\label{dafainpre}z_U^T\big|_{u_1=c_1,\dots ,u_{n-1}=c_{n-1} }= \frac{u_n-c_n}{u_n-\delta}\prod_{i=r+1}^{n-1}\,\frac{(u_n-c_i)^2}{(u_n-c_i)^2-1}\ .\end{equation}
We shall rewrite in  another form the product
$$\varpi_U(u):=\prod_{i=r+1}^{n-1}\, \frac{(u-c_i)^2}{(u-c_i)^2-1}\ .$$
Note that $\varpi_U(u)$ depends only on the shape of the diagrams $\lambda'(U)$, $\nu(U)$ and $\lambda''(U)$. Indeed, the product 
$\prod_{i=r+1}^{n-1} (u-c_i)$ is the same as the product $\prod (u-c(\alpha))$ over all cells $\alpha$ of  $\lambda'(U)\setminus\nu(U)$ and $\lambda''(U)$. The contents are constant along the diagonals, so each factor will appear with the multiplicity equal to the length of the corresponding diagonal. 
Taking into account the definitions (\ref{c3}) and (\ref{c3}) of the contents, we find
$$\prod_{i=r+1}^{n-1} (u-c_i)=\prod_{k\in\mathbb{Z}}\, (u+k)^{g_{\lambda'(U)\setminus\nu(U)}(k)}\times 
\prod_{k\in\mathbb{Z}}\, (u-k-\delta)^{g_{\lambda''(U)}(k)}\ .$$
The denominator of $\varpi_U(u)$ contains two products, with factors $( u-c_i-1)$ and $( u-c_i+1)$. 
Shifting correspondingly the index in each product, we find that the whole expression $\varpi_U(u)$ is equal to
$$\varpi_U(u)=\varpi_U'(u)\ \varpi_U''(u)\ ,$$
where
$$\varpi_U'(u):=\prod_{k\in\mathbb{Z}}\, (u+k)^{\Delta_{\lambda'(U)\setminus\nu(U)}(k)}\ \ ,\ \  
\varpi_U''(u):= \prod_{k\in\mathbb{Z}}\, (u-k-\delta)^{\Delta_{\lambda''(U)}(k)}\ ,$$
and we denoted, for a diagram $\gamma$, the Laplacian of the function $g_{\gamma}$ with the minus  sign,  
$$\Delta_{\gamma}(k):=2g_{\gamma}(k)-g_{\gamma}(k+1)-g_{\gamma}(k-1)\ .$$
We analyze the function
$$\frac{u_n-c_n}{u_n-\delta}\, \varpi_U'(u_n)\ \varpi_U''(u_n)\ ,$$
depending  on what happens at the $n$-th step. 
\begin{itemize}
\item If $T$ is obtained from $U$ by adding a cell to the second diagram
on a diagonal $\underline{k}$ then $c_n=\underline{k}+\delta$. The function $\varpi_U'(u_n)$ is regular at $u_n=c_n$
and the direct inspection shows that the function $\frac{u_n-c_n}{u_n-\delta}\,\varpi_U''(u_n)$ is regular at $u_n=c_n$
as well (similarly to the fusion for the symmetric group). Thus the last evaluation of the function $\Psi_{r,s}$ is finite, 
which corresponds to our statement that $p_n(T)=0$ in this case.  

\item If $T$ is obtained from $U$ by removing a cell from the first  diagram on a diagonal $\underline{k}$ then $c_n=-\underline{k}$. The function $\frac{1}{u_n-\delta}\varpi_U''(u_n)$ is regular at $u_n=c_n$. However the function
$(u_n+\underline{k})\,\varpi_U'(u_n)$ may have singularities at $u_n=c_n$. The Laplacian $\Delta$ can take several 
values depending on the location of the diagonal $\underline{k}$. Different types of the location are illustrated on 
Figures \ref{nu-lambda-a}, \ref{nu-lambda-r} and \ref{nu-lambda-0} (the diagonal is shown in red), and in each case the 
value $\Delta_{\lambda'(U)\setminus\nu(U)}(\underline{k})$ is given - it can be directly read off the corresponding figure. 
There is one more case when the diagonal runs into the horizontal line of the border of $\lambda'$; then, similarly to the 
case of the vertical line of the border $\lambda'$, as on Figure \ref{nu-lambda-0}, we have 
$\Delta_{\lambda'(U)\setminus\nu(U)}(\underline{k})=-1$. In each case it is straightforward to see that the needed
value $p_n(T)$ of the exponent is indeed given by (\ref{deexpn}).
\end{itemize}

\begin{figure}[H]
\centering
\includegraphics[scale=0.24]{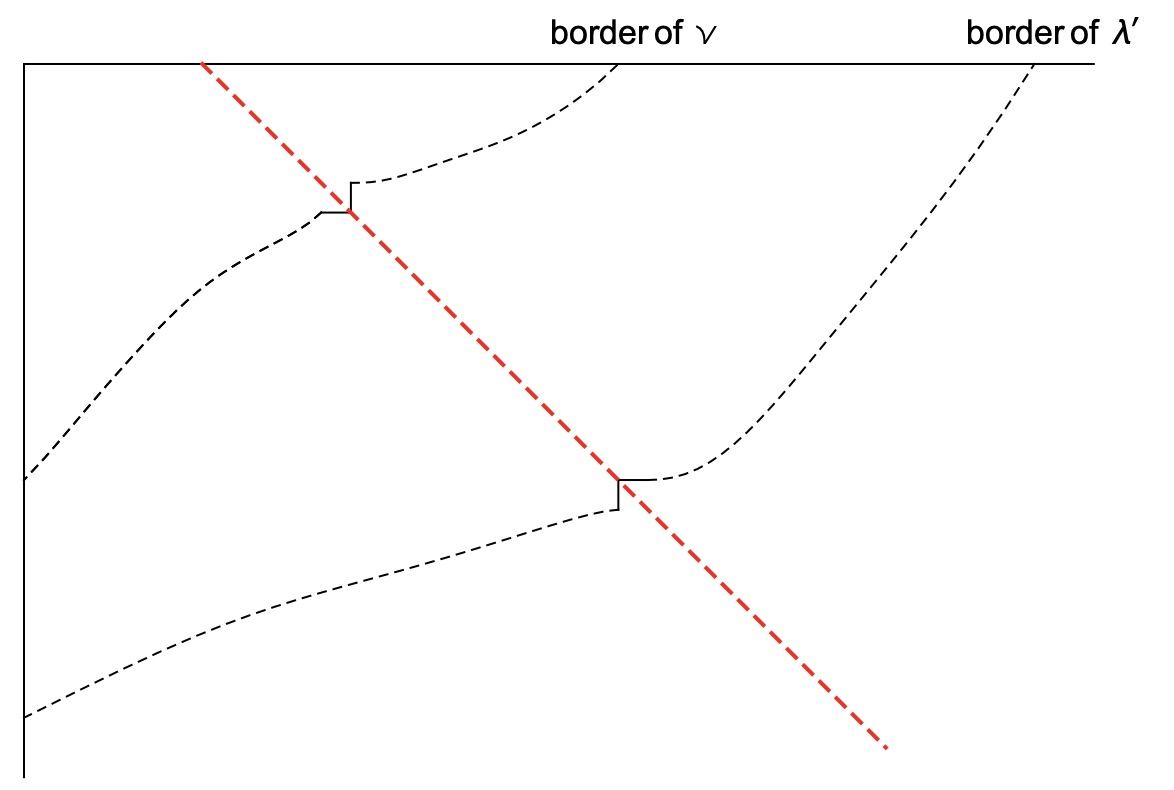}
\caption{$\Delta_{\lambda'(U)\setminus\nu(U)}(\underline{k})=-2$}
\label{nu-lambda-a}
\end{figure}

\vskip 1.8cm
\begin{figure}[H]
\centering
\includegraphics[scale=0.24]{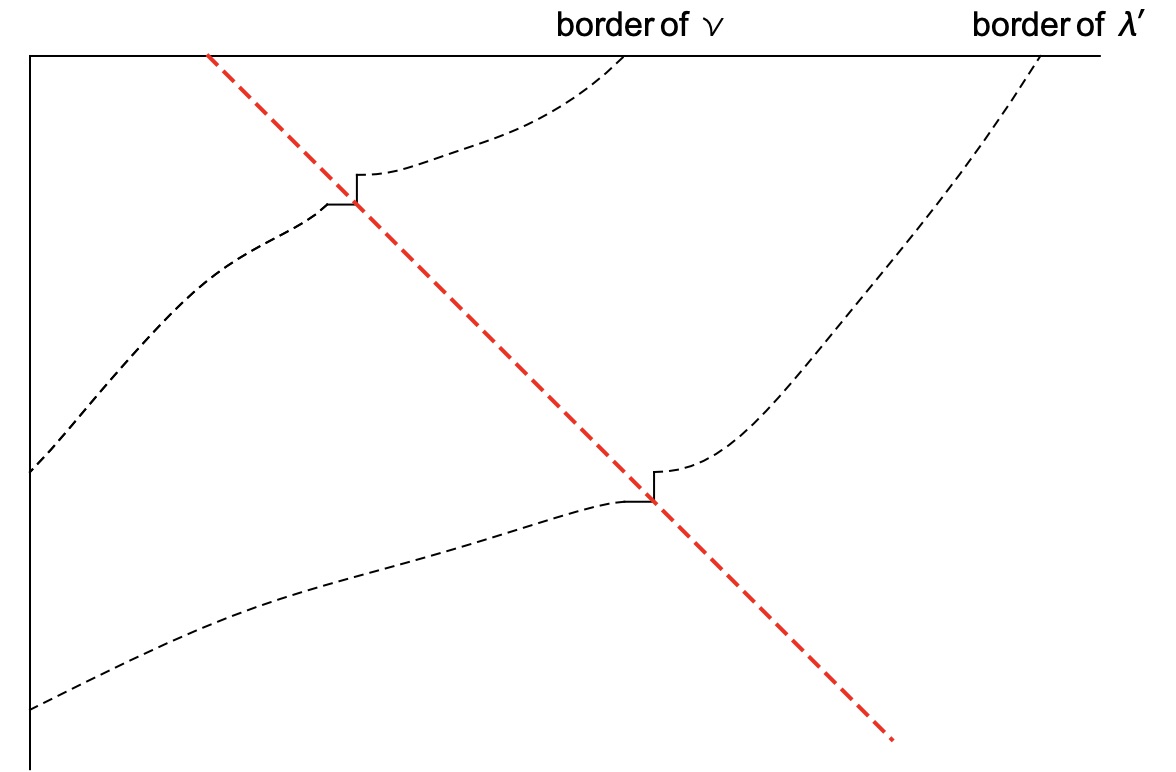} \caption{$\Delta_{\lambda'(U)\setminus\nu(U)}(\underline{k})=0$}
\label{nu-lambda-r}
\end{figure}
\begin{figure}[H]
\centering
\includegraphics[scale=0.24]{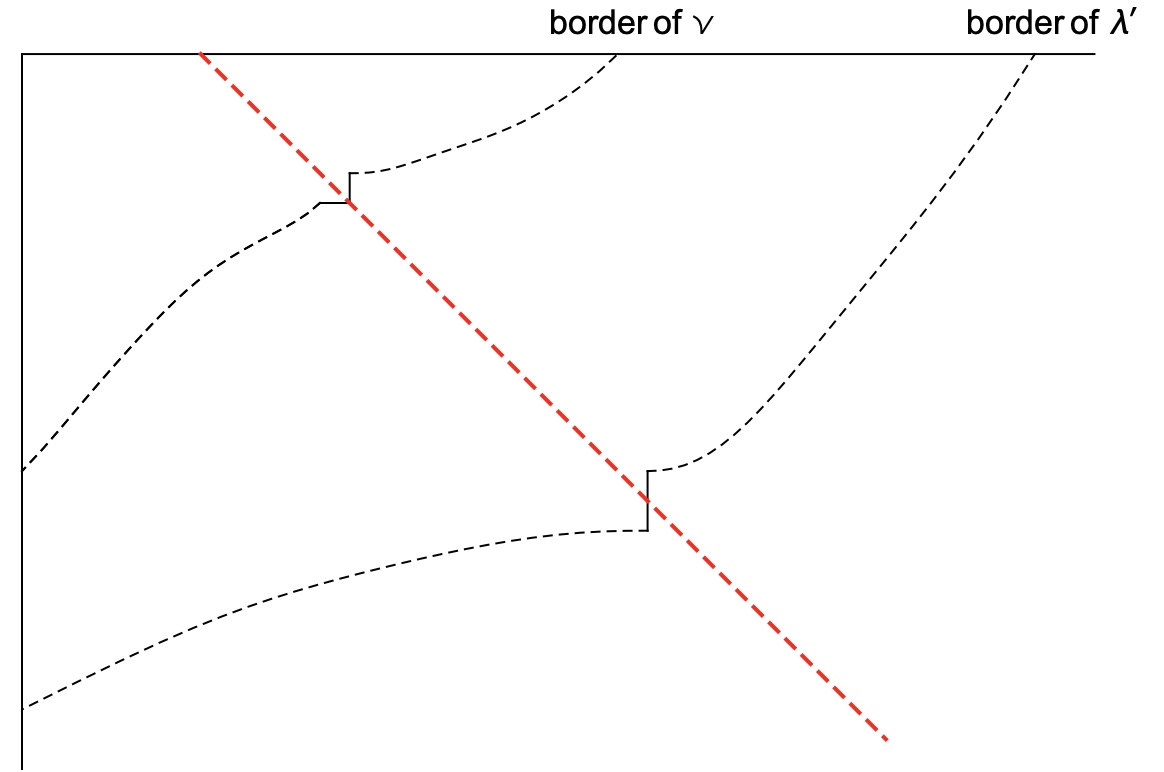} \caption{$\Delta_{\lambda'(U)\setminus\nu(U)}(\underline{k})=-1$}
\label{nu-lambda-0}
\end{figure}
The proof is completed.\hspace{.4cm}\myblacksquare

\subsection{Proof of Proposition \ref{prop:fusb2}}
\paragraph{1.} We prove the assertion (\ref{fubtve1}). Some gymnastics, \`a la in the Proof of Lemma \ref{psiind}, with the baxterized elements leads to the recursion
$$\mathring{\Psi}_{n;T_r,h}=\mathring{\Psi}_{n-1;T_r,h}\cdot \mathring{s}_{n}^{'\downarrow}\mathring{\mathfrak{d}}_n^{'\uparrow}
\mathring{\mathfrak{d}}_n^{\downarrow}\mathring{s}_{n}^{\uparrow}\ .$$
Here
$$\mathring{s}_{n}^{'\downarrow}:=s_{n-1,n}'(u_{n-1}+u_n)s_{n-2,n}'(u_{n-2}+u_n)\ldots s_{r+1,n}'(u_{r+1}+u_n)$$
and
$$\mathring{s}_{n}^{\uparrow}:=s_{r+1,n}(u_{r+1}+u_n)s_{r+2,n}(u_{r+2}+u_n)\ldots s_{n-1,n}(u_{n-1}+u_n)\ .$$
As before, it suffices to analyze the last evaluation $\ldots\big|_{u_n=c_n}$. Extracting the factors containing $u_n$ in the prefactor $\mathring{z}_{T;h}$,
see (\ref{nopref}), we see that we have to prove that for $U\nearrow T$ the evaluation of the expression
$$z_U^{'T} z_U^T E_U\cdot\mathfrak{X}\ \ ,\ \  \text{with}\ \ \mathfrak{X}:= \left( \mathring{s}_{n}^{'\downarrow}\mathring{\mathfrak{d}}_n^{'\uparrow}
\mathring{\mathfrak{d}}_n^{\downarrow}\mathring{s}_{n}^{\uparrow}\right)\vert_{u_{r+1}=c_{r+1},\dots ,u_{n-1}=c_{n-1}}\ ,$$
at $u_n=c_n$ is equal to $E_T$. Here $z_U^T$ is defined in (\ref{jztu}) and $z_U^{'T}$ is the following rational function in $u_n$:
$$z_U^{'T}:=\frac{u_n-h+\delta}{u_n+c_n-h}\ .$$
Lemma \ref{wacro2} implies that 
$$E_U\cdot  \left( \mathring{s}_{n}^{'\downarrow}\mathring{\mathfrak{d}}_n^{'\uparrow}\right)\vert_{u_{r+1}=c_{r+1},\dots ,u_{n-1}=c_{n-1}}=
\frac{h-u_n-x_n}{h-u_n-\delta}\,E_U\ .$$
By the fusion procedure of Theorem \ref{thm:fus}, the evaluation of  the expression 
$$z_U^T E_U\cdot \left(\mathring{\mathfrak{d}}_n^{\downarrow}\mathring{s}_{n}^{\uparrow}\right)\vert_{u_{r+1}=c_{r+1},\dots ,u_{n-1}=c_{n-1}}$$
at $u_n=c_n$ equals $E_T$, and $x_n E_T=c_n E_T$, so we are done.

\paragraph{2.} It is straightforward to see that the Jucys--Murphy elements are stable with respect to the anti-involution $\iota$ defined in (\ref{antiai}). 
Therefore the idempotents $E_T$ are stable with respect to $\iota$ as well. However, we have the identity
$$\iota \left( \mathring{\Psi}_{n;T_r,h} \right)=\mathring{\widetilde\Psi}_{n;T_r,h}$$
which implies the the assertion (\ref{fubtve2}).\hspace{.4cm}\myblacksquare

\vskip .3cm\noindent{\footnotesize
{\textbf{Acknowledgements.}
The work of D. B. was funded by Excellence Initiative of Aix-Marseille University -- A*MIDEX and Excellence Laboratory Archimedes LabEx, French "Investissements d'Avenir" programmes. The work of O. O. was supported by the Program of Competitive Growth of Kazan Federal University and by the grant RFBR 17-01-00585.}}

\end{document}